\documentclass[reqno,12pt]{amsart}
\usepackage{amssymb,amsmath,amsthm,mathtools,mathrsfs,calc}
\usepackage{enumerate}
\usepackage[margin=1in]{geometry}
\usepackage{bm}

\makeatletter
\let\@@pmod\pmod
\DeclareRobustCommand{\pmod}{\@ifstar\@pmods\@@pmod}
\def\@pmods#1{\mkern4mu({\operator@font mod}\mkern 6mu#1)}
\makeatother

\def\Z{\mathbb{Z}}
\def\Q{\mathbb{Q}}

\def\H{\mathbb{H}}
\def\N{\mathbb{N}}
\def\C{\mathbb{C}}
\def\P{\mathbb{P}}
\def\F{\mathbb{F}}
\def\sQ{\mathcal{Q}}
\def\fQ{\mathscr{Q}}

\DeclareMathOperator{\im}{Im}
\DeclareMathOperator{\re}{Re}
\def\SL{{\rm SL}}

\def\GL{{\rm GL}}
\newcommand{\pfrac}[2]{\left(\frac{#1}{#2}\right)}
\newcommand{\ptfrac}[2]{\left(\tfrac{#1}{#2}\right)}
\newcommand{\pMatrix}[4]{\left(\begin{matrix}#1 & #2 \\ #3 & #4\end{matrix}\right)}
\renewcommand{\pmatrix}[4]{\left(\begin{smallmatrix}#1 & #2 \\ #3 & #4\end{smallmatrix}\right)}
\renewcommand{\bar}[1]{\overline{#1}}
\newcommand{\M}{\mathcal M}
\newcommand{\bP}{\bm{P}}
\renewcommand{\sl}{\big|}

\DeclareMathOperator{\sgn}{sgn}
\DeclareMathOperator{\Tr}{Tr}

\def\ep{\varepsilon}

\let\temp\phi
\let\phi\varphi
\let\varphi\temp

\newtheorem{theorem}{Theorem}
\newtheorem{lemma}[theorem]{Lemma}
\newtheorem{proposition}[theorem]{Proposition}
\theoremstyle{remark}
\newtheorem*{remark}{Remark}

\numberwithin{equation}{section}

\begin{document}

\title[Singular invariants]{Singular invariants and coefficients of \\ weak harmonic Maass forms of weight $5/2$}
\author{Nickolas Andersen}
\address{Department of Mathematics\\
University of Illinois\\
Urbana, IL 61801} 
\email{nandrsn4@illinois.edu}
\subjclass[2010]{Primary 11F37; Secondary 11P82}

\begin{abstract}	
We study the coefficients of a natural basis for the space of weak harmonic Maass forms of weight $5/2$ on the full modular group.
The non-holomorphic part of the first element of this infinite basis encodes the values of the partition function $p(n)$. 
We show that the coefficients of these harmonic Maass forms are given by traces of singular invariants.
These are values of non-holomorphic modular functions at CM points or their real quadratic analogues: cycle integrals of such functions along geodesics on the modular curve.
The real quadratic case relates to recent work of Duke, Imamo\=glu, and T\'oth on cycle integrals of the $j$-function, while the imaginary quadratic case recovers the algebraic formula of Bruinier and Ono for the partition function.
\end{abstract}

\maketitle

\allowdisplaybreaks

\section{Introduction}

A \emph{weak}\footnote{For brevity, we will drop the adjective ``weak'' throughout this paper.} \emph{harmonic Maass form} of weight $k$ is a real analytic function on the upper-half plane $\H$ which transforms like a modular form of weight $k$, is annihilated by the weight $k$ hyperbolic Laplacian
\begin{equation} \label{eq:def-laplacian}
	\Delta_k := -y^2 \left(\frac{\partial^2}{\partial x^2} + \frac{\partial^2}{\partial y^2}\right) + iky\left(\frac{\partial}{\partial x} + i\frac{\partial}{\partial y}\right), \qquad \tau = x+iy
\end{equation}
and has at most linear exponential growth at the cusps.
Such a form $h$ has a natural decomposition $h=h^++h^-$ into the holomorphic part $h^+$ (also called a \emph{mock modular form}) and the nonholomorphic part $h^-$. 
Let $\xi_k$ denote the differential operator
\begin{equation*}
	\xi_k := 2i y^k \overline{\frac{\partial}{\partial\bar \tau}}.
\end{equation*}
The function $g:=\xi_k h=\xi_k h^-$, the so-called \emph{shadow} of $h^+$, is a weakly holomorphic modular form (a modular form whose poles, if any, are supported at the cusps) of weight $2-k$.
See \cite{Bruinier:2004,Ono:2009} for background on harmonic Maass forms.

A natural problem is to determine what arithmetic data, if any, is encoded in the coefficients of mock modular forms.
Although such coefficients are not well understood in general, significant progress has been made recently in certain cases.
For instance, Ramanujan's mock theta functions (whose coefficients encode combinatorial information) are mock modular forms of weight $1/2$ whose shadows are linear combinations of weight $3/2$ theta series. 
Zwegers' discovery connecting mock theta functions to harmonic Maass forms \cite{Zwegers:RealAnalytic,Zwegers:thesis} has inspired many works (see \cite{Bringmann:2006,BO:Dyson,Zagier:2009} and the references therein for examples).
Recently Bruinier and Ono \cite{Bruinier:2010a} considered harmonic Maass forms whose shadows are cusp forms orthogonal to the weight $3/2$ theta series. 
They showed that the coefficients of these forms are related to central critical values and derivatives of weight $2$ modular $L$-functions.
Later Bruinier \cite{Bruinier:Periods} connected these coefficients to periods of algebraic differentials of the third kind on modular and elliptic curves.

In this paper, we investigate the arithmetic nature of the coefficients of mock modular forms of weight $5/2$ whose shadows are weakly holomorphic modular forms.
This relates to work of Duke, Imamo\=glu, and T\'oth \cite{DIT:CycleIntegrals} regarding a particularly interesting family of weight $1/2$ mock modular forms which are related to Zagier's proof \cite{Zagier:2002} of Borcherds' theorem \cite{Borcherds:infinite} on infinite product formulas for modular forms. 
Let $J$ denote the normalized Hauptmodul for $\SL_2(\Z)$ given by
\[
	J(\tau) := j(\tau) - 744 = \frac 1q + 196884q+ \ldots, \qquad q:= \exp(2\pi i \tau).
\]
For each nonzero discriminant $d\equiv 0,1\pmod{4}$, let $\fQ_d$ denote the set of quadratic forms of discriminant $d$ which are positive definite when $d<0$.
The modular group $\Gamma_1:=\mathrm{PSL}_2(\Z)$ acts on these forms, and the set $\Gamma_1\backslash \fQ_d$ is a finite abelian group.

If $Q$ is positive definite then $Q(\tau,1)$ has exactly one zero $\tau_Q$ in $\H$. For fixed $d$, the values
\begin{equation} \label{eq:singular-moduli}
	J(\tau_Q), \qquad Q\in \Gamma_1 \backslash \fQ_d
\end{equation}
are conjugate algebraic integers known as \emph{singular moduli}. 
Zagier \cite{Zagier:2002} showed that the weighted sums
\begin{equation}  \label{eq:zagier-traces}
	\sum_{Q\in \Gamma_1\backslash \fQ_{dD}} \frac{\chi_D(Q)}{w_Q} J(\tau_Q)
\end{equation}
appear as coefficients of a basis $\{g_D\}_{0<D\equiv 0,1(4)}$ for $\M_{3/2}^+$, the plus-space of weakly holomorphic modular forms of weight $3/2$ on $\Gamma_0(4)$.
Here $w_Q$ denotes the order of the stabilizer of $Q$ in $\Gamma_1$, and $\chi_D:\fQ_d\to\{-1,0,1\}$ is a generalized genus character (see Section I.2 of \cite{GKZ:Heegner}).

The coefficients of the forms in Zagier's basis $\{g_D\}$ also appear as coefficients of forms in the basis $\{f_D\}_{0\geq D\equiv 0,1(4)}$ given by Borcherds in Section 14 of \cite{Borcherds:infinite}. Borcherds showed that the coefficients of the $f_D$ are the exponents in the infinite product expansions of certain meromorphic modular forms.
In \cite{DIT:CycleIntegrals}, Duke, Imamo\=glu, and T\'oth extended Borcherds' basis to a basis $\{f_D\}_{D\equiv 0,1(4)}$ for $\mathbb{M}_{1/2}$, the space of mock modular forms of weight $1/2$ on $\Gamma_0(4)$ satisfying Kohnen's plus-space condition.
When $D>0$, the shadow of the mock modular form $f_D$ is proportional to the weakly holomorphic form $g_D$.
Whereas the forms $f_D$ for $D<0$ encode imaginary quadratic information in the form of traces of singular moduli, the forms $f_D$ for $D>0$ encode real quadratic information in the form of traces of cycle integrals of $J(\tau)$.
To explain this requires some notation.

For an indefinite quadratic form $Q$ with non-square discriminant, let $C_Q$ denote the geodesic in $\H$ connecting the two (necessarily irrational) roots of $Q$, modulo the stabilizer of $Q$. Then $C_Q$ defines a closed geodesic on the modular curve $\Gamma_1\backslash \H\cup\{\infty\}$, and the cycle integral
\begin{equation} \label{eq:cycle-integral}
	\int_{C_Q} J(\tau) \frac{d\tau}{Q(\tau,1)}
\end{equation}
is a well-defined invariant of the equivalence class of $Q$.
The beautiful result of Duke, Imamo\=glu, and T\'oth \cite{DIT:CycleIntegrals} interprets the coefficient $a(D,d)$ of $q^d$ in $f_D$ ($D>0$) in terms of the sums
\begin{equation} \label{eq:dit-traces}
	\frac{1}{2\pi} \sum_{Q\in \Gamma_1\backslash\fQ_{dD}} \chi_D(Q) \int_{C_Q} J(\tau) \frac{d\tau}{Q(\tau,1)},
\end{equation}
whenever $dD$ is not a square. 
However, their theorem does not include the coefficients $a(D,d)$ when $dD$ is a square since, in that case, the integral \eqref{eq:cycle-integral} diverges.
There are two approaches to studying these square-indexed coefficients.
The first method, by Bruinier, Funke and Imamo\=glu in \cite{BFI:Regularized}, involves regularizing the divergent integral \eqref{eq:cycle-integral}; their method generalizes the results of \cite{DIT:CycleIntegrals} to a large class of non-holomorphic modular functions.
The second method, by the present author in \cite{Andersen:Geodesics}, involves replacing $J(\tau)$ in \eqref{eq:cycle-integral} by $J_Q(\tau)$, a dampened version of $J(\tau)$ which depends on $Q$ (and for which the resulting integral converges).

Here we study $H_{5/2}^!(\ep)$, the space of harmonic Maass forms of weight $5/2$ which transform as
\[
	f(\gamma \tau) = \ep(\gamma) (c\tau+d)^{5/2} f(\tau) \quad \text{ for all }\gamma=\pmatrix abcd \in \SL_2(\Z).
\]
Here $\ep$ is the multiplier system defined by
\[
	\ep(\gamma \tau) := \frac{\eta(\gamma \tau)}{\sqrt{c\tau+d} \, \eta(\tau)},
\]
where $\eta$ is the Dedekind eta-function $\eta(\tau):= q^{1/24}\prod_{k\geq 1}(1-q^k)$.
We will show that the coefficients of forms in $H_{5/2}^!(\ep)$ are given by traces of singular invariants; that is, sums of CM values of certain weight $0$ weak Maass forms on $\Gamma_0(6)$ in the imaginary quadratic case, or cycle integrals of those forms over geodesics on the modular curve 
\[
	X_0(6) := \widehat{ \Gamma_0(6) \backslash \H }
\] 
in the real quadratic case.

In \cite{Ahlgren:2013aa}, the author and Ahlgren constructed an infinite basis $\{h_m\}_{m\equiv 1(24)}$ for $H_{5/2}^!(\ep)$. 
For $m<0$ the $h_m$ are holomorphic, while for $m>0$ they are non-holomorphic harmonic Maass forms whose shadows are weakly holomorphic modular forms of weight $-1/2$. 
The first element, ${\bP}:=h_1$, has Fourier expansion\footnote{Note that here, and in \eqref{eq:h-m-neg} and \eqref{eq:h-m-pos}, we have renormalized the functions ${\bP}$ and $h_m$ from \cite{Ahlgren:2013aa}. See \eqref{eq:h-m-normalize} below.}
\begin{equation} \label{eq:P-exp}
	{\bP}(\tau) = i \, q^{1/24} + \sum_{0<n\equiv 1(24)} n\, p(1,n) q^{n/24} 
	- i\beta(-y)q^{1/24} + \sum_{0>n\equiv 1(24)} |n|\, p(1,n) \beta(|n|y) q^{n/24},
\end{equation}
where $\beta(y)$ is the normalized incomplete gamma function
\[
	\beta(y) := \frac{\Gamma\left(-\tfrac 32, \tfrac{\pi y}{6}\right)}{\Gamma(-\tfrac32)} = \frac{3}{4\sqrt\pi}\int_{\pi y/6}^\infty e^{-t} t^{-5/2} dt.
\]
The function ${\bP}(\tau)$ is of particular interest since $\xi_{5/2}({\bP})$ is proportional to 
\[
	\eta^{-1}(\tau) = q^{-1/24} + \sum_{0>n\equiv 1(24)} p\ptfrac{1-n}{24} q^{|n|/24},
\]
where $p\pfrac{1-n}{24}$ is the ordinary partition function. Corollary 2 of \cite{Ahlgren:2013aa} shows that that for negative $n\equiv 1\pmod{24}$ we have $p(1,n)=\sqrt{|n|}\,p\ptfrac{1-n}{24}$.

Building on work of Hardy and Ramanujan \cite{HardyRamanujan}, Rademacher \cite{Rademacher1,Rademacher2} proved the exact formula 
\[
	p(k) = 2\pi (24k-1)^{-3/4} \sum_{c>0} \frac{A_c(k)}{c} I_{3/2}\pfrac{\pi \sqrt {24k-1}}{6c},
\]
where $I_{3/2}(x)$ is the $I$-Bessel function and $A_c(k)$ is a Kloosterman sum.
Using Rademacher's formula, Bringmann and Ono \cite{BO:partition} showed that $p(k)$ can be written as a sum of CM values of a certain non-holomorphic Maass-Poincar\'e series on $\Gamma_0(6)$. 
Later Bruinier and Ono \cite{BO:AlgebraicFormulas} refined the results of \cite{BO:partition}; by applying a theta lift to a certain weight $-2$ modular form, they obtained a new formula for $p(k)$ as a finite sum of algebraic numbers.

The formula of Bruinier and Ono involves a certain non-holomorphic, $\Gamma_0(6)$-invariant function $P(\tau)$, which the authors define in the following way.
Let
\[
	F(\tau) := \frac 12 \cdot \frac{E_2(\tau) - 2E_2(2\tau) - 3E_2(3\tau) + 6E_2(6\tau)}{(\eta(\tau)\eta(2\tau)\eta(3\tau)\eta(6\tau))^2} = q^{-1} - 10 - 29q - \ldots,
\]
where $E_2$ denotes the weight $2$ quasi-modular Eisenstein series
\[
	E_2(\tau) := 1-24\sum_{n=1}^\infty \sum_{d|n} d q^n.
\]
The function $F(\tau)$ is in $M_{-2}^!(\Gamma_0(6),1,-1)$, the space of weakly holomorphic modular forms of weight $-2$ on $\Gamma_0(6)$, having eigenvalues $1$ and $-1$ under the Atkin-Lehner involutions $W_6$ and $W_2$, respectively (see Section~\ref{sec:poincare} for definitions). 
The function $P(\tau)$ is the weak Maass form given by
\[
	P(\tau) 
	:= \frac{1}{4\pi}R_{-2}F(\tau) = -\left(q\frac{d}{dq} + \frac{1}{2\pi y}\right) F(\tau),
\]
where $R_{-2}$ is the Maass raising operator in weight $-2$ (see Section~\ref{sec:poincare} for details).

For each $n\equiv 1\pmod{24}$ and $r\in \{1,5,7,11\}$, let
\[
	\sQ^{(r)}_n := \left\{ [a,b,c]\in \fQ_n : 6\mid a \text{ and } b\equiv r\pmod*{12} \right\}.
\]
Here $[a,b,c]$ denotes the quadratic form $ax^2+bxy+cy^2$.
The group $\Gamma:=\Gamma_0(6)/\{\pm 1\}$ acts on $\sQ^{(r)}_n$, and for each $r$ we have the canonical isomorphism (see \cite[Section I]{GKZ:Heegner})
\begin{equation} \label{eq:quad-form-iso}
	\Gamma \backslash \sQ^{(r)}_n \stackrel{\sim}{\longrightarrow} \Gamma_1 \backslash \fQ_n.
\end{equation}
Bruinier and Ono \cite[Theorem 1.1]{BO:AlgebraicFormulas} proved the finite algebraic formula
\begin{equation} \label{eq:bo-alg-formula}
	p\ptfrac{1-n}{24} 
		= \frac 1n \sum_{Q\in \Gamma\backslash\sQ^{(1)}_{n}} P(\tau_Q),
\end{equation}
for each $0>n\equiv 1\pmod{24}$.
Therefore the coefficients of the non-holomorphic part of ${\bP}(\tau)$ are given in terms of singular invariants (note the similarity with the expression \eqref{eq:zagier-traces}). 

For the holomorphic part of ${\bP}(\tau)$ one might suspect by analogy with \eqref{eq:dit-traces} that the traces
\begin{equation} \label{eq:traces-no-deriv}
	\sum_{Q\in \Gamma\backslash \sQ_n^{(1)}} \int_{C_Q} P(\tau) \frac{d\tau}{Q(\tau,1)}
\end{equation}
give the coefficients $p(1,n)$. 
However, as we show in the remarks following Proposition \ref{prop:traces-no-deriv} below, the traces \eqref{eq:traces-no-deriv} are identically zero whenever $n$ is not a square. 
In order to describe the arithmetic nature of the coefficients $p(1,n)$, we must introduce the function $P(\tau,s)$, of which $P(\tau)$ is the specialization at $s=2$ (see Section \ref{sec:poincare} for the definition of $P(\tau,s)$). 
Similarly, for each $Q\in \sQ_n^{(1)}$ whose discriminant is a square, we will define $P_{Q}(\tau,s)$, a dampened version of $P(\tau,s)$ (see Section \ref{sec:quadratic}). 
These are analogues of the functions $J_Q(\tau)$ and are defined in \eqref{eq:def-p-v-q} below. 
Then, for each $n\equiv 1\pmod{24}$ we define the trace
\begin{equation} \label{eq:def-traces-1}
	\Tr(n) := 
	\begin{dcases}
		\frac 1{\sqrt{|n|}} \sum_{Q\in \Gamma\backslash \sQ^{(1)}_n} P(\tau_Q) & \text{ if } n<0, \\
		\frac 1{2\pi} \sum_{Q\in \Gamma\backslash \sQ^{(1)}_n} \int_{C_Q} \left[\frac{\partial}{\partial s}P(\tau,s) \bigg|_{s=2}\right] \frac{d\tau}{Q(\tau,1)} & \text{ if } n>0 \text{ is not a square}, \\
		\frac 1{2\pi} \sum_{Q\in \Gamma\backslash \sQ^{(1)}_n} \int_{C_Q} \left[\frac{\partial}{\partial s}P_{Q}(\tau,s) \bigg|_{s=2}\right] \frac{d\tau}{Q(\tau,1)} & \text{ if } n>0 \text{ is a square}.
	\end{dcases}
\end{equation}

We have the following theorem which provides an arithmetic interpretation for the coefficients of ${\bP}(\tau)$ by relating them to the traces \eqref{eq:def-traces-1}. It is a special case of Theorem~\ref{thm:main} below.

\begin{theorem} \label{thm:1}
	For each $n\equiv 1\pmod{24}$ we have
	\begin{equation}
		p(1,n) = \Tr(n).
	\end{equation}
\end{theorem}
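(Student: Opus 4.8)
The plan is to deduce Theorem~\ref{thm:1} from the more general Theorem~\ref{thm:main}, treating the three cases of \eqref{eq:def-traces-1} separately; only the two real quadratic cases require genuinely new work. For $n<0$ the assertion is essentially already known: combining the Bruinier--Ono algebraic formula \eqref{eq:bo-alg-formula} with the relation $p(1,n)=\sqrt{|n|}\,p\ptfrac{1-n}{24}$ from Corollary~2 of \cite{Ahlgren:2013aa} immediately identifies $p(1,n)$ with the CM trace $\tfrac{1}{\sqrt{|n|}}\sum_{Q}P(\tau_Q)$, once the normalization (and sign) of $P$ is reconciled. Thus the heart of the theorem lies in the case $n>0$, where both the coefficient $p(1,n)$ and the trace $\Tr(n)$ must be put into a common analytic form and matched.

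The strategy I would follow is to realize both sides as Kloosterman--Bessel series. On the one hand, $\bP$ is one of the weight $5/2$ Maass--Poincar\'e series constructed in \cite{Ahlgren:2013aa}, so its coefficient $p(1,n)$ admits an exact Rademacher-type expansion as a sum over moduli $c$ of half-integral weight Kloosterman sums (with the $\eta$-multiplier $\ep$) against $I$-Bessel functions. On the other hand, I would realize $P(\tau,s)$ as a weight $0$ Maass--Poincar\'e series on $\Gamma_0(6)$ with spectral parameter $s$, specializing to $P(\tau)$ at $s=2$ (consistent with $\Delta_0 P=-2P$), and compute its Fourier expansion explicitly. The trace $\Tr(n)$ is then evaluated by inserting this Poincar\'e series into \eqref{eq:def-traces-1} and unfolding the sum over $Q\in\Gamma\backslash\sQ_n^{(1)}$ against the sum defining the Poincar\'e series; using the isomorphism \eqref{eq:quad-form-iso} to reduce to quadratic forms for $\Gamma_1$ and exploiting the genus-character structure converts the resulting exponential sums into Sali\'e-type sums and the cycle integrals (or CM evaluations) into Bessel integrals. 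Comparing the two expansions term by term reduces the theorem to an identity between the weight $5/2$ Kloosterman sums and the Sali\'e sums produced by the unfolding---precisely the Shimura/Katok--Sarnak-type reciprocity that underlies all such trace formulas.

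Two features force the more elaborate shape of \eqref{eq:def-traces-1} when $n>0$. First, the naive traces \eqref{eq:traces-no-deriv} of $P(\tau)$ itself vanish identically for non-square $n$, as recorded in the remarks following Proposition~\ref{prop:traces-no-deriv}; the arithmetic content therefore sits in the first-order term, which is why one integrates $\tfrac{\partial}{\partial s}P(\tau,s)|_{s=2}$ rather than $P(\tau)$. Carrying the parameter $s$ through the unfolding means working with the full family of Whittaker and Bessel functions attached to eigenvalue $s(1-s)$ and only differentiating and specializing at $s=2$ at the end; this introduces analytic continuation in $s$ as a bookkeeping burden but should present no essential difficulty.

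The main obstacle is the square case. Here the geodesics $C_Q$ run out to the cusp and the cycle integral \eqref{eq:cycle-integral} of $P$ diverges, exactly as in \cite{DIT:CycleIntegrals}; this is the reason for replacing $P(\tau,s)$ by the dampened function $P_{Q}(\tau,s)$ of \eqref{eq:def-p-v-q}. The delicate points will be (i) to prove that $\int_{C_Q}\tfrac{\partial}{\partial s}P_{Q}(\tau,s)|_{s=2}\,\tfrac{d\tau}{Q(\tau,1)}$ converges and is independent of the choices made in the dampening, (ii) to carry out the unfolding in the presence of the $Q$-dependent correction term, and (iii) to verify that this correction contributes exactly the extra piece needed for the Kloosterman--Bessel matching of the second paragraph to persist when $n$ is a square. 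I expect (iii) to be the crux, since it is where the convergent regularization must be reconciled with the clean closed form dictated by the coefficient $p(1,n)$.
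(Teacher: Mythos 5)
Your outline for $n>0$ coincides in structure with the paper's actual proof: both sides are expanded as Kloosterman--Bessel series --- the coefficients $p(1,n)$ via the exact formulas \eqref{eq:p-m-n-exact} from \cite{Ahlgren:2013aa}, and the trace by unfolding the Poincar\'e series \eqref{eq:def-p-v-tau-s} over $\Gamma_\infty\backslash\Gamma$ together with the Atkin--Lehner decomposition \eqref{eq:Qn-decomp-W-d} (the paper works directly with $\Gamma_0(6)$-forms; the isomorphism \eqref{eq:quad-form-iso} plays no role in the unfolding), leading to the genus-character Weyl sums $S_v(m,n;24c)$ of \eqref{eq:def-S-v} paired with Bessel integrals. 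The genuine gap is your last step: you dismiss the required matching of exponential sums as ``precisely the Shimura/Katok--Sarnak-type reciprocity that underlies all such trace formulas.'' No such identity was available here. The Kloosterman sums attached to $\bP$ carry the eta-multiplier factor $e^{\pi i s(d,c)}$, with $s(d,c)$ the Dedekind sum \eqref{eq:def-dedekind-sum}, and relating them to the sums $S_v$ is exactly Theorem~\ref{thm:kloosterman} (equivalently Proposition~\ref{prop:s-k}) --- a new result generalizing Selberg's evaluation \eqref{eq:selberg-kloo}, whose proof adapts Kohnen's argument via Whiteman's treatment of Fischer's Gauss sums (Lemma~\ref{lem:s-d-c-H}) and occupies all of Section~\ref{sec:kloosterman}. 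Without proving that identity, your term-by-term comparison cannot be completed. Relatedly, your expectation that the crux is the square-discriminant regularization (your point (iii)) is misplaced: in the paper that case folds into the same unfolding simply by discarding the forms with $a=0$ (passing from \eqref{eq:L-v-plus-gamma-W-d-square} to \eqref{eq:L-v-plus-Gamma-infty} with the restriction $a\neq 0$), while essentially all of the difficulty sits in Proposition~\ref{prop:s-k}.

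For $n<0$ your argument also inverts the paper's logic. You propose to quote the Bruinier--Ono formula \eqref{eq:bo-alg-formula} together with $p(1,n)=\sqrt{|n|}\,p\ptfrac{1-n}{24}$; that is admissible as an appeal to the literature, but it is not the paper's proof. There the case $mn<0$ is handled by the same Poincar\'e-series unfolding (yielding \eqref{eq:mn-neg}) and the same Kloosterman identity, matched against the Rademacher-type expansion in \eqref{eq:p-m-n-exact}, so that \eqref{eq:bo-alg-formula} is \emph{recovered} as a consequence --- the remark after Theorem~\ref{thm:1} stresses that this proof is quite different from that of \cite{BO:AlgebraicFormulas}. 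So your plan either imports the imaginary quadratic case from outside, or, if made self-contained, again runs through Theorem~\ref{thm:kloosterman}; either way the missing Kloosterman--Sali\'e identity is the step you cannot wave away.
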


\begin{remark}
	Since $p(1,n)=\sqrt{|n|} \, p\pfrac{1-n}{24}$ for $0>n\equiv 1\pmod{24}$, Theorem \ref{thm:1} recovers the algebraic formula \eqref{eq:bo-alg-formula} of Bruinier and Ono. However, our proof is quite different than the proof given in \cite{BO:AlgebraicFormulas}.
\end{remark}

Recall that ${\bP}=h_1$ is the first element of an infinite basis $\{h_m\}$ for $H_{5/2}(\ep)$.
We turn to the other harmonic Maass forms $h_m$ for general $m\equiv 1\pmod{24}$.
For negative $m$, we have the Fourier expansion (see \cite[Theorem 1]{Ahlgren:2013aa})
\begin{equation} \label{eq:h-m-neg}
	h_m(\tau) = |m|^{3/2} q^{m/24} - \sum_{0<n\equiv 1(24)} |mn|\, p(m,n) q^{n/24}.
\end{equation}
These forms are holomorphic on $\H$ and can be constructed using $\eta(\tau)$ and $j'(\tau):=-q\frac{d}{dq}j(\tau)$.
We list a few examples here.
\begin{align*}
	23^{-\frac32}h_{-23} &= \eta \, j' \\
	&= q^{-23/24} - q^{1/24} - 196\,885 \,q^{25/24} - 42\,790\,636 \,q^{49/24} - \ldots, \\
	47^{-\frac32}h_{-47} &= \eta \, j'(j-743) \\
	&= q^{-47/24} - 2 \,q^{1/24} - 21\,690\,645 \,q^{25/24} - 40\,513\,206\,272 \,q^{49/24} - \ldots, \\
	71^{-\frac32}h_{-71} &= \eta \, j'(j^2-1487j+355\,910) \\
	&= q^{-71/24} - 3 \,q^{1/24} - 886\,187\,500 \,q^{25/24} - 8\,543\,738\,297\,129 \,q^{49/24} - \ldots.
\end{align*}
For positive $m$, we have the Fourier expansion (see \cite[Theorem 1]{Ahlgren:2013aa})
\begin{multline} \label{eq:h-m-pos}
	h_m = i\, m^{3/2} q^{m/24} + \sum_{0<n\equiv 1(24)} mn \, p(m,n) q^{n/24} \\
	- i\, m^{3/2} \beta(-m y) q^{m/24} + \sum_{0>n\equiv 1(24)} |mn|\, p(m,n) \beta(|n|y) q^{n/24}.
\end{multline}
For these $m$, the shadow of $h_m$ is proportional to the weight $-1/2$ form
\[
	g_m(\tau) := m^{-3/2} q^{-m/24} + \sum_{0>n\equiv 1(24)} |mn|^{-1/2} \, p(n,m) q^{|n|/24},
\]
with $p(n,m)$ as in \eqref{eq:h-m-neg}. 
When $mn<0$, we have the relation $p(n,m)=-p(m,n)$.
The first few examples of these forms are
\begin{align*}
	g_1 
	&= \eta^{-1} \\
	&= q^{-\frac{1}{24}} + q^{\frac{23}{24}} + 2 \,q^{\frac{47}{24}} + 3 \,q^{\frac{71}{24}} + 5 \,q^{\frac{95}{24}} + 7 \,q^{\frac{119}{24}} + 11 \,q^{\frac{143}{24}}+ \ldots, \\
	5^3 g_{25} 
	&= \eta^{-1} (j - 745) \\
	&= q^{-\frac{25}{24}}+196\,885\, \,q^{23/24}+21\,690\,645 \,q^{47/24}+886\,187\,500 \,q^{71/24} + \ldots,  \\
	7^3 g_{49} 
	&= \eta^{-1}(j^2-1489j+160\,511) \\
	&=  q^{-\frac{49}{24}}+42\,790\,636 \,q^{23/24}+40\,513\,206\,272 \,q^{47/24}+8\,543\,738\,297\,129 \,q^{71/24}+ \ldots. 
\end{align*}
We also have the relation $p(m,n)=p(n,m)$ when $m,n>0$ (see \cite[Corollary 2]{Ahlgren:2013aa}).

In order to give an arithmetic interpretation for the coefficients $p(m,n)$ for general $m$, we require a family of functions $\{P_v(\tau,s)\}_{v\in \N}$ whose first member is $P_1(\tau,s)=P(\tau,s)$.
We construct these functions in Section \ref{sec:poincare} using non-holomorphic Maass-Poincar\'e series. 
The specializations $P_v(\tau):=P_v(\tau,2)$ can be obtained by raising the elements of a certain basis $\{F_v\}_{v\in \N}$ of $M_{-2}(\Gamma_0(6),1,-1)$ to weight $0$.
The functions $F_v$ are uniquely determined by having Fourier expansion $F_v=q^{-v}+O(1)$.
They are easily constructed from $F$ and the $\Gamma_0(6)$-Hauptmodul
\[
	J_6(\tau) := \left(\frac{\eta(\tau)\eta(2\tau)}{\eta(3\tau)\eta(6\tau)}\right)^4 + \left(3\frac{\eta(3\tau)\eta(6\tau)}{\eta(\tau)\eta(2\tau)}\right)^4 = q^{-1} - 4 + 79q + 352q^2 + \ldots.
\]
For example, $F_1=F$ and
\begin{align*}
	F_2 &= F(J_6 + 14) = q^{-2} - 50 - 832\,q - 5693\,q^2 - \ldots, \\
	F_3 &= F(J_6^2 + 18J_6 + 27) = q^{-3} - 190 - 7371\,q - 108\,216\,q^2 - \ldots, \\
	F_4 &= F(J_6^3 + 22J_6^2 + 20J_6 - 1160) = q^{-4} - 370 - 48\,640\,q - 1\,100\,352\,q^2 - \ldots.
\end{align*}
We also require functions $P_{v,Q}(\tau,s)$ for each quadratic form with square discriminant. These are dampened versions of the functions $P_v(\tau,s)$, and are constructed in Section \ref{sec:quadratic}.

With $\chi_m:\sQ_{mn}^{(1)}\to\{-1,0,1\}$ as in \eqref{eq:def-chi-m} below,
we define the general twisted traces for $m,n\equiv 1\pmod{24}$ by
\begin{equation}
	\Tr_v(m,n) := 
	\begin{dcases}
		\frac{1}{\sqrt{|mn|}}\sum_{Q\in \Gamma \backslash \sQ^{(1)}_{mn}} \chi_m(Q) P_v(\tau_Q) & \text{if }mn<0, \\
		\frac{1}{2\pi} \sum_{Q\in \Gamma \backslash \sQ^{(1)}_{mn}} \chi_m(Q) \int_{C_Q} \left[\frac{\partial}{\partial s} P_v(\tau,s) \Big|_{s=2} \right] \frac{d\tau}{Q(\tau,1)} & \parbox[c]{.18\textwidth}{if $mn>0$ \\ is not a square,} \\
		\frac{1}{2\pi} \sum_{Q\in \Gamma \backslash \sQ^{(1)}_{mn}} \chi_m(Q) \int_{C_Q} \left[\frac{\partial}{\partial s} P_{v,Q}(\tau,s) \Big|_{s=2} \right] \frac{d\tau}{Q(\tau,1)} & \parbox[c]{.18\textwidth}{if $mn>0$ \\ is a square.}
	\end{dcases}
\end{equation}

The main theorem relates the coefficients $p(m,n)$ to these twisted traces, giving an arithmetic interpretation of the coefficients $p(m,n)$ for all $m,n$.

\begin{theorem} \label{thm:main}
	Suppose that $m,n \equiv 1\pmod{24}$ and that $m$ is squarefree. For each $v\geq1$ coprime to $6$ we have
	\begin{equation} \label{eq:main-thm}
		\Tr_v(m,n) = \sum_{d|v} d \pfrac{m}{v/d} \pfrac{12}{d} p(d^2 m,n).
	\end{equation}
\end{theorem}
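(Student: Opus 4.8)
The plan is to realize the generating function $\sum_n \Tr_v(m,n)\, q^{n/24}$ as a regularized theta lift of the weight-$0$ Maass--Poincar\'e series $P_v$ and to identify that lift inside $H_{5/2}^!(\ep)$ with the explicit combination of basis elements on the right-hand side of \eqref{eq:main-thm}. The vehicle is a Shintani-type theta kernel $\Theta(\tau,z)$ transforming with weight $5/2$ and multiplier $\ep$ in $\tau$ and $\Gamma_0(6)$-invariantly of weight $0$ in $z$, built from a Gaussian attached to the lattice of forms $\sQ^{(1)}_{mn}$ and twisted by the genus character $\chi_m$. Against this kernel one forms the regularized integral
\[
	\Phi_v(m;\tau,s) := \int^{\mathrm{reg}}_{\Gamma_0(6)\backslash\H} P_v(z,s)\,\Theta(\tau,z)\, d\mu(z),
\]
with the regularization handled as in \cite{BFI:Regularized} in the divergent cases.

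First I would compute the Fourier expansion of $\Phi_v(m;\tau,s)$ by unfolding the theta kernel into its sum over $\sQ^{(1)}_{mn}$. The Katok--Sarnak / Duke--Imamo\=glu--T\'oth mechanism forces the negative-discriminant terms ($mn<0$) to collapse to CM values $P_v(\tau_Q,s)$ and the positive-discriminant terms ($mn>0$) to collapse to cycle integrals of $P_v(\cdot,s)$ along the geodesics $C_Q$. Since the cycle integrals of $P_v(\cdot,2)$ vanish for non-square $mn$ (cf.\ the remark following \eqref{eq:traces-no-deriv}), the genuine contribution in the real quadratic case is the first-order term at $s=2$, namely the $s$-derivative appearing in \eqref{eq:def-traces-1}, whereas in the imaginary quadratic case the value at $s=2$ is already finite. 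Matching the power $q^{n/24}$ then identifies the $n$-th coefficient of $\Phi_v(m;\tau,s)$ at (or infinitesimally near) $s=2$ with $\Tr_v(m,n)$ up to the normalizations fixed in \eqref{eq:def-traces-1}.

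Next I would show that the resulting weight-$5/2$ form lies in $H_{5/2}^!(\ep)$ and pin down its principal part. Harmonicity and the transformation law are inherited from $\Theta$, while the growth at $\infty$ is governed by the principal part $F_v=q^{-v}+O(1)$ of the form underlying $P_v$. Because the $h_m$ are determined in $H_{5/2}^!(\ep)$ by their principal parts, it suffices to read off the negative $q$-powers of the lift. Here the arithmetic factor enters through the Shimura/Shintani correspondence: the functions $P_v$ arise from $P_1=P$ through the $\Gamma_0(6)$-Hauptmodul $J_6$, and under the theta lift this build-up intertwines with a Hecke operator acting on the discriminant index $m$ of the twisted trace. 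Its multiplicative structure, whose character $\pfrac{12}{d}$ is exactly the eta-multiplier character in $\eta(\tau)=\sum_{d\geq1}\pfrac{12}{d}q^{d^2/24}$, sends $m$ to $d^2m$ with weights $d\pfrac{m}{v/d}\pfrac{12}{d}$, producing precisely the right-hand side of \eqref{eq:main-thm}. Equivalently, one may prove the case $v=1$ directly and bootstrap to general $v$ coprime to $6$ through this Hecke relation.

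The main obstacle will be the square-discriminant case $mn=\square$, where both the cycle integral \eqref{eq:cycle-integral} and the corresponding theta integral diverge; this is exactly what the dampened functions $P_{v,Q}(\tau,s)$ are designed to handle. The delicate point is to prove that inserting $P_{v,Q}$ in place of $P_v$ inside each geodesic integral reproduces precisely the finite part extracted by the regularization of $\Phi_v(m;\tau,s)$, so that the third line of \eqref{eq:def-traces-1} agrees with the square-indexed coefficient of the lift. Justifying the analytic continuation of $P_v(z,s)$ to $s=2$, the interchange of the $s$-derivative with the regularized integral, and the uniform convergence needed to unfold the theta kernel are the supporting estimates I expect to demand the most care.
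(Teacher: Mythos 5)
Your strategy --- realizing $\sum_n \Tr_v(m,n)q^{n/24}$ as a BFI-style regularized theta lift of $P_v$ and identifying the lift inside $H_{5/2}^!(\ep)$ --- is a genuinely different route from the paper's. The paper never introduces a theta kernel: it unfolds the traces of the Poincar\'e series $P_v(\tau,s)$ and $P_{v,Q}(\tau,s)$ directly into a sum over $\Gamma_\infty\backslash\sQ_{mn}$ (Proposition \ref{prop:traces-no-deriv}), evaluates the geodesic and CM contributions by Bessel-function identities, converts the resulting exponential sums into the eta-multiplier Kloosterman sums via Proposition \ref{prop:s-k} (an adaptation of Kohnen's argument, equivalent to Theorem \ref{thm:kloosterman}), and finally compares with the exact Rademacher-type formulas \eqref{eq:p-m-n-exact} for $p(m,n)$, differentiating in $s$ at $s=2$ only at the last step. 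However, your sketch has a genuine gap at its central step.

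The gap is the harmonicity of the object you propose to identify by principal parts. At $s=2$ the lift $\Phi_v(m;\tau,2)$ has holomorphic-part coefficients given by the traces of $P_v(\cdot,2)$, which (as you yourself note, and as the remark following Proposition \ref{prop:traces-no-deriv} shows) vanish for all non-square $mn>0$; so $\Phi_v(m;\tau,2)$ is \emph{not} the combination of the $h_{d^2m}$ that encodes the right-hand side of \eqref{eq:main-thm}. The theorem therefore lives entirely in $\partial_s\Phi_v(m;\tau,s)\big|_{s=2}$. But for $s\neq 2$ the input $P_v(\cdot,s)$ has Laplace eigenvalue $s(1-s)\neq -2$, so the lift is an eigenfunction with nonzero weight-$5/2$ eigenvalue $\mu(s)$, $\mu(2)=0$; differentiating the eigenvalue equation gives $\Delta_{5/2}\,\partial_s\Phi_v(m;\tau,s)\big|_{s=2}=\mu'(2)\,\Phi_v(m;\tau,2)$, and $\Phi_v(m;\tau,2)\neq 0$ in general (its square-indexed coefficients survive, by the same remark). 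Thus the derivative is sesquiharmonic, not an element of $H_{5/2}^!(\ep)$, and the argument ``the $h_m$ are determined by their principal parts'' does not apply to it; repairing this requires a theory of such derivative lifts that you have not supplied. Separately, the Hecke intertwining you invoke to produce the factors $d\pfrac{m}{v/d}\pfrac{12}{d}$ and the index shift $m\mapsto d^2m$ is asserted rather than proved; in the paper this is exactly where the hardest arithmetic input sits (Proposition \ref{prop:s-k}), and nothing in your sketch substitutes for it. The same applies to the square-discriminant case: you correctly flag that matching the dampened functions $P_{v,Q}$ against the regularization is the delicate point, but you give no argument for it.
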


In the proof of Theorem \ref{thm:main} we will encounter the Kloosterman sum
\begin{equation} \label{eq:def-kloo}
	K(a,b;c) := \sum_{\substack{d\bmod c \\ (d,c)=1}} e^{\pi i s(d,c)} e\left(\frac{\bar d a+db}{c}\right),
\end{equation}
where $\bar d$ denotes the inverse of $d$ modulo $c$ and $e(x):=\exp(2\pi i x)$.
Here $s(d,c)$ is the Dedekind sum
\begin{equation} \label{eq:def-dedekind-sum}
	s(d,c) := \sum_{r=1}^{c-1} \left( \frac rc - \biggl\lfloor\frac rc\biggr\rfloor - \frac 12 \right)\left( \frac{dr}c - \left\lfloor\frac{dr}c\right\rfloor - \frac 12 \right)
\end{equation}
which appears in the transformation formula for $\eta(z)$ (see Section 2.8 of \cite{Iwaniec:Topics}).
The presence of the factor $e^{\pi i s(d,c)}$ makes the Kloosterman sum quite difficult to evaluate.
The following formula, proved by Whiteman in \cite{Whiteman}, is attributed to Selberg and gives an evaluation in the special case $a=0$:
\begin{equation} \label{eq:selberg-kloo}
	K(0,b;c) = \sqrt{\frac c3} \sum_{\substack{\ell\bmod {2c} \\ (3\ell^2+\ell)/2 \equiv b (c)}} (-1)^\ell \cos\left( \frac{6\ell+1}{6c}\pi \right).
\end{equation}
Theorem \ref{thm:kloosterman} below is a generalization of \eqref{eq:selberg-kloo} which is of independent interest. For each $v$ with $(v,6)=1$ and for each $m,n\equiv 1\pmod{24}$ with $m$ squarefree, define
\begin{equation} \label{eq:def-S-v}
	S_v(m,n;24c) := \sum_{\substack{b\bmod {24c} \\ b^2\equiv mn(24c)}} \pfrac{12}{b} \chi_m\left(\left[6c,b,\tfrac{b^2-mn}{24c}\right]\right) \, e\pfrac{bv}{12c}.
\end{equation}
It is not difficult to show (see \eqref{eq:m-v-1-whiteman} below) that the right-hand side of \eqref{eq:selberg-kloo} is equal to 
\[
	\frac 14 S_1(1,24b+1;24c).
\]

\begin{theorem} \label{thm:kloosterman}
Suppose that $n=24n'+1$ and that $M=v^2m=24M'+1$, where $m$ is squarefree and $(v,6)=1$. Then
\begin{equation} \label{eq:kloo-thm}
	K(M',n';c) = 4\sqrt{\frac c3}\, \pfrac{12}v \sum_{u|(v,c)} \mu(u) \pfrac mu S_{v/u}(m,n;24c/u).
\end{equation}
\end{theorem}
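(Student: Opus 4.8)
The plan is to evaluate $K(M',n';c)$ directly by exploiting the fact that $\eta$ is itself a weight $1/2$ theta series, and then to pass from the squarefree case to general $v$ by a M\"obius argument. The starting point is Euler's identity $\eta(\tau)=\sum_{k\geq 1}\pfrac{12}{k}q^{k^2/24}$, which exhibits the $\eta$-multiplier---and hence the factor $e^{\pi i s(d,c)}$ in \eqref{eq:def-kloo}---as, up to elementary factors, a quadratic Gauss sum over residues modulo $24c$. Substituting such a Gauss-sum expression for $e^{\pi i s(d,c)}$ into $K(M',n';c)$ and interchanging the order of summation, the inner sum over $d$ coprime to $c$ is a Ramanujan-type sum that collapses and forces a quadratic congruence. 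The surviving terms are then indexed by residues $b$ with $b^2\equiv Mn\pmod{24c}$, exactly the index set of \eqref{eq:def-S-v}, and the Gauss-sum evaluation supplies both the normalization $\sqrt{c/3}$ and the character value $\pfrac{12}{b}$.

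First I would settle the squarefree case $v=1$, where $M=m$. There the computation above should give $K(M',n';c)=4\sqrt{c/3}\,S_1(m,n;24c)$, which for $m=1$ reduces to Selberg's formula \eqref{eq:selberg-kloo} rewritten via the identity noted after that equation. The genuinely new feature for squarefree $m>1$ is the genus character $\chi_m$ inside $S_1(m,n;24c)$: I expect it to emerge from quadratic reciprocity applied to the Legendre symbols that accumulate once one tracks the dependence of the Gauss sums on $m$, matching the description of $\chi_m$ on the forms $[6c,b,(b^2-mn)/24c]$ through the primes they represent.

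For general $v$ coprime to $6$ the difficulty is that $M=v^2m$ is no longer squarefree, so the Gauss-sum evaluation produces a sum over residues $b$ with $b^2\equiv v^2mn\pmod{24c}$ whose solutions are organized according to the common divisor $(b,v)$. The plan is to stratify these solutions by the largest $u\mid(v,c)$ dividing the relevant data, so that after rescaling the modulus from $24c$ to $24c/u$ and replacing $b$ by a corresponding square root of $mn$, the contribution of each stratum becomes a copy of the twisted sum $S_{v/u}(m,n;24c/u)$, the passage from the exponential $e(b/12c)$ to $e(b(v/u)/12(c/u))$ explaining the shift from $S_1$ to $S_{v/u}$. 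M\"obius inversion over this divisor structure then assembles the alternating sum $\sum_{u\mid(v,c)}\mu(u)\pfrac{m}{u}S_{v/u}(m,n;24c/u)$, with $\pfrac{m}{u}$ coming from the same reciprocity bookkeeping as in the squarefree case and the global factor $\pfrac{12}{v}$ collecting the dependence of $\pfrac{12}{b}$ on the rescaling.

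The main obstacle I anticipate lies in the two reciprocity steps: producing the genus character $\chi_m$, rather than a bare product of Legendre symbols, in the squarefree case, and then disentangling the square roots of $v^2mn$ cleanly enough that each stratum yields a single copy of $S_{v/u}(m,n;24c/u)$ with precisely the twist $v/u$ in its exponential. Controlling the interaction between the modulus $24c$ and its $2$- and $3$-parts---where the factor $\pfrac{12}{\cdot}$ and the hypothesis $(v,6)=1$ enter---while carrying the constant $4\sqrt{c/3}$ through all of these manipulations is where the real work will be.
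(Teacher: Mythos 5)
Your plan is the direct Sali\'e-sum evaluation of $K(M',n';c)$ in the style of T\'oth, Duke, and Jenkins \cite{Toth:Salie,Duke:Uniform,Jenkins:Kloosterman}, and it founders at exactly the point the paper's remark following Proposition \ref{prop:s-k} warns about: that method is only known to succeed in the special cases without the genus-character twist, and the general case of this theorem is precisely the twisted one. Concretely, your step ``the inner sum over $d$ \ldots is a Ramanujan-type sum that collapses'' is false for squarefree $m>1$. What the Gauss-sum expression for the multiplier (Lemma \ref{lem:s-d-c-H}, due to Whiteman \cite{Whiteman}) can absorb is the factor $e\bigl(\bar d(v^2-1)/24c\bigr)$; since $e\ptfrac{\bar d M'}{c}=e\bigl(\bar d(v^2m-1)/24c\bigr)$, a factor $e\bigl(\bar d\,v^2(m-1)/24c\bigr)$ survives the substitution. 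When $m=1$ (the Selberg--Duke--Jenkins case, where $24M'+1$ is a perfect square) this factor is trivial, the $d$-sum involves only $d$ and does collapse, and your outline goes through. When $m>1$ the $d$-sum retains both $d$ and $\bar d$ and is a genuine Sali\'e-type sum; one can evaluate such sums prime by prime, but reassembling the local data into the single global value $\chi_m\bigl([6c,b,(b^2-mn)/24c]\bigr)$ is the real content of the theorem, and there is no straightforward way to localize on the left-hand side because $e^{\pi i s(d,c)}$ does not factor over coprime moduli. ``I expect it to emerge from quadratic reciprocity'' is precisely the step that has never been carried out; flagging it as an anticipated obstacle does not discharge it.

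The paper sidesteps this by proving the identity in the opposite direction. Proposition \ref{prop:s-k} expresses $S_v(m,n;24c)$ as the divisor sum \eqref{eq:s-k} of Kloosterman sums (no M\"obius signs), and is proved by comparing Fourier transforms in $v$ modulo $12c$: on the $S$-side orthogonality immediately isolates a single residue $b\equiv h$, while on the $K$-side the troublesome factor $e\bigl(\bar d(m-1)v^2/24u\bigr)$ is \emph{summed over} $v$ and thereby becomes a Gauss sum in $v$ (the sums $G(\bar d(m-1)/2,\cdot\,,12u)$ of \eqref{eq:r-h-2-gauss-sums}); this is what the $v$-average buys and what is unavailable at fixed $v$. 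The resulting local factors $F_h(u)$ are multiplicative in $u$, so the identity reduces to the prime-power verification \eqref{eq:F-h-c-prime-power} against the explicit formula \eqref{eq:chi-explicit} for $\chi_m$, and Theorem \ref{thm:kloosterman} then follows by M\"obius inversion in two variables. Note also that your final assembly hides an index-set mismatch you would still have to repair: the direct evaluation produces residues $b$ with $b^2\equiv v^2mn\pmod*{24c}$, whereas $S_{v/u}(m,n;24c/u)$ in \eqref{eq:def-S-v} runs over $b^2\equiv mn\pmod*{24c/u}$, and matching these strata together with their $\chi_m$-values for $u>1$ is a second substantive computation, not formal M\"obius bookkeeping. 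If you want to salvage your approach, aim first at the un-inverted identity \eqref{eq:s-k}.
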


When $(mn,c)=1$ we clearly have $S_v(m,n;24c) \ll_\epsilon c^\epsilon$ for any $\epsilon>0$, which yields the bound (for any fixed integers $m$, $n$)
\begin{equation} \label{eq:kloo-bound}
	K(m,n;c) \ll_\epsilon c^{\frac 12+\epsilon}, \qquad (mn,c)=1.
\end{equation}
This is reminiscent of Weil's bound (see \cite{Weil:Exponential} and \cite[Lemma 2]{Hooley:Asymptotic}) for the ordinary Kloosterman sum
\[
	k(m,n;c):=\sum_{\substack{d\bmod c \\ (d,c)=1}} e\pfrac{\bar d m+dn}{c} \ll_\epsilon (m,n,c)^{\frac 12}c^{\frac 12+\epsilon}.
\]

Our proof of Theorem \ref{thm:main} follows the method of Duke, Imamo\=glu, and T\'oth \cite{DIT:CycleIntegrals} in the case when $mn$ is not a square, and the author \cite{Andersen:Geodesics} in the case when $mn$ is a square. 
We construct the functions $P_v(\tau,s)$ and $P_{v,Q}(\tau,s)$ as Poincar\'e series and then evaluate the traces of these Poincar\'e series directly. 
We match these evaluations to the formulas given in \cite{Ahlgren:2013aa} for the coefficients $p(m,n)$. 

In Section \ref{sec:poincare} we review some facts about weak Maass forms and construct the functions $P_v(\tau,s)$. 
In Section \ref{sec:quadratic} we discuss binary quadratic forms and establish some facts which we will need for the proof of the main theorem. 
We construct the functions $P_{v,Q}(\tau,s)$ at the end of Section \ref{sec:quadratic}.
In Section \ref{sec:kloosterman} we prove a proposition which is equivalent to Theorem \ref{thm:kloosterman} and is a crucial ingredient in the proof of Theorem~\ref{thm:main}. The proof of Theorem \ref{thm:main} comprises the final section of the paper.

\section{Poincar\'e series and the construction of $P_v(\tau,s)$} \label{sec:poincare}

In this section we construct the weak Maass forms $P_v(\tau,s)$ in terms of Poincar\'e series. 
We first recall the definition and basic properties of weak Maass forms, and show how such forms can be constructed using Poincar\'e series associated with Whittaker functions.
We then discuss the Atkin-Lehner involutions $W_d$ which are used to build the functions $P_v(\tau,s)$.

\subsection{Weak Maass forms and Poincar\'e series} 
For $\gamma = \pmatrix abcd\in \GL_2^+(\Q)$ and $k\in 2\Z$, we define the weight $k$ slash operator $\sl_k$ by
\[
	\left(f \sl_k \gamma\right) (\tau) := (\det \gamma)^{k/2} (c\tau+d)^{-k} f\pfrac{a\tau+b}{c\tau+d}.
\]
Let $\Gamma'\subseteq \Gamma_1$ be a congruence subgroup.
A weak Maass form of weight $k$ and Laplace eigenvalue $\lambda$ is a smooth function $f:\H\to \C$ satisfying
\begin{enumerate}
	\item $f\sl_k \gamma=f$ for all $\gamma \in \Gamma'$,
	\item $\Delta_k f = \lambda f$, where $\Delta_k$ is defined in \eqref{eq:def-laplacian}, and
	\item $f$ has at most linear exponential growth at each cusp of $\Gamma'$. 
\end{enumerate}
If $\lambda=0$, we say that $f$ is a harmonic Maass form.
The differential operator
\[
	\xi_k := 2iy^k \frac{\bar\partial}{\partial \bar\tau}
\]
plays an important role in the theory of harmonic Maass forms.
It commutes with the slash operator; that is,
\[
	\xi_k \left(f\sl_k \gamma\right) = \left(\xi_k f\right) \sl_{2-k} \gamma.
\]
Thus, if $f$ has weight $k$ then $\xi_k f$ has weight $2-k$.
The Laplacian $\Delta_k$ decomposes as
\[
	\Delta_k = -\xi_{2-k} \circ \xi_k,
\]
which shows that $\xi_k$ maps harmonic Maass forms to weakly holomorphic modular forms.

Define
\[
	\Gamma := \Gamma_0(6) / \{\pm 1\}.
\]
We follow Section 2.6 of \cite{BO:AlgebraicFormulas} in constructing Poincar\'e series for $\Gamma$ attached to special values of the $M$-Whittaker function $M_{\mu,\nu}(y)$ (see Chapter 13 of \cite{AS:Pocketbook} for the definition and relevant properties). Let $v$ be a positive integer, and for $s\in \C$ and $y>0$, define
\begin{equation} \label{def-M-s-k}
	\M_{s,k}(y) := y^{-k/2} M_{-k/2,s-1/2}(y)
\end{equation}
and 
\[
	\phi_v(\tau,s,k):=\M_{s,k}(4\pi v y)e(-vx).
\]
Then 
\begin{equation} \label{eq:phi-bound}
	\phi_v(\tau,s,k)\ll y^{\re(s)-k/2} \text{ as } y\to 0.
\end{equation}
Letting $\Gamma_\infty:=\{\pmatrix 1*01\}\subseteq \Gamma$ denote the stabilizer of $\infty$, we define the Poincar\'e series
\begin{equation}
	\F_v(\tau,s,k) := \frac{1}{\Gamma(2s)} \sum_{\gamma\in\Gamma_\infty\backslash\Gamma} (\phi_v\sl_k \gamma)(\tau,s,k).
\end{equation}
On compact subsets of $\H$, we have (by \eqref{eq:phi-bound}) the bound
\[
	|\F_v(\tau,s,k)| \ll y^{\re(s)-k/2}\sum_{\pmatrix abcd \in \Gamma_{\infty} \backslash\Gamma} |c\tau+d|^{-2\re (s)},
\]
so $\F_v(\tau,s,k)$ converges normally for $\re(s)>1$.
A computation involving (13.1.31) of \cite{AS:Pocketbook} shows that
\[
	\Delta_k \, \phi_v(\tau,s,k) = (s-k/2)(1-k/2-s) \phi_v(\tau,s,k).
\]
Since $\F_v$ clearly satisfies $\F_v \sl_k \gamma = \F_v$ for all $\gamma\in \Gamma$, we see that for fixed $s$ with $\re(s)>1$, the function $\F_v(\tau,s,k)$ is a weak Maass form of weight $k$ and Laplace eigenvalue $(s-k/2)(1-k/2-s)$.

We are primarily interested in the case when $k$ is negative.
In this case the special value $\F_v(\tau,1-k/2,k)$ is a harmonic Maass form.
Its principal part at $\infty$ is given by $q^{-v}+c_0$ for some $c_0\in \C$, while its principal parts at the other cusps are constant.
Thus, $\xi_k \F_v(\tau,1-k/2,k)$ is a cusp form of weight $2-k$ on $\Gamma$.

\subsection{Atkin-Lehner involutions} 
We recall some basic facts on Atkin-Lehner involutions (see, for example, Section IX.7 of \cite{Knapp:elliptic} or Section 2.4 of \cite{Ono:Web}).
Suppose that $N$ is a positive squarefree integer and that $d \mid N$. 
Let $W_d=W_d^N$ denote any matrix with determinant $d$ of the form
\[
	W_d = \pMatrix{d\alpha}{\beta}{N\gamma}{d\delta}
\]
with $\alpha,\beta,\gamma,\delta\in \Z$.
The relation
\begin{equation} \label{eq:W-d-normalizes}
	W_d \ \Gamma_0(N) \ W_d^{-1} = \Gamma_0(N)
\end{equation}
shows that the map $f\mapsto f\sl_k W_d$ (called the Atkin-Lehner involution $W_d$) is independent of the choices of $\alpha,\beta,\gamma,\delta$ and defines an involution on the space of weight $k$ forms on $\Gamma_0(N)$.
If $d$ and $d'$ are divisors of $N$, then
\[
	f \sl_k W_d \sl_k W_{d'} = f \sl_k W_{d*d'},
\]
where $d*d'=dd'/(d,d')^2$.
When $d=N$ it is convenient to take $W_N=\pmatrix0{-1}N0$. Finally, the Atkin-Lehner involutions act transitively on the cusps of $\Gamma_0(N)$; that is, for each cusp $\mathfrak a \in \Gamma_0(N)\backslash \P^1(\Q)$, there exists a unique $d\mid N$ such that $W_d\,\infty=\mathfrak a$.

Recall that $F_v(\tau)=q^{-v}+O(1)$ is a weakly holomorphic modular form of weight $-2$ on $\Gamma_0(6)$ with eigenvalues $1$ and $-1$ under $W_6$ and $W_2$, respectively.
We claim that
\begin{equation} \label{eq:F-v-poincare}
	F_v(\tau) = \sum_{d|6} \mu(d) \left(\F_v \sl_{-2} W_d\right)(\tau,2,-2).
\end{equation}
To prove this, let $\tilde F_v(\tau)$ denote the right-hand side of \eqref{eq:F-v-poincare}.
Since $\xi_{-2}\F_v(\tau,2,-2)$ lies in the one-dimensional space of cusp forms of weight $4$ on $\Gamma_0(6)$, it must be proportional to
\begin{equation*} \label{eq:G-v-shadow}
	g(\tau) := \left( \eta(\tau)\eta(2\tau)\eta(3\tau)\eta(6\tau) \right)^2.
\end{equation*}
Since $\xi_k$ commutes with the slash operator and $g(\tau)$ is invariant under $W_d$ for each $d\mid 6$, we have, for some $\alpha\in\C$, the relation
\[
	\xi_{-2} \tilde F_v(\tau) = \alpha\sum_{d\mid 6} \mu(d) g(\tau) = 0.
\]
Thus $F_v(\tau)-\tilde F_v(\tau)$ is holomorphic on $\H$ and vanishes at every cusp. 
Hence $F_v(\tau)=\tilde F_v(\tau)$.

\subsection{The functions $P_v(\tau)$, $P_v(\tau,s)$} 
To construct the functions $P_v(\tau)$ and $P_v(\tau,s)$, we require the Maass raising operator
\begin{equation} \label{eq:def-raising}
	R_k := 2i\frac{\partial}{\partial \tau} + \frac ky
\end{equation}
which raises the weight of a weak Maass form by $2$.
For each $v\geq 1$, we define
\[
	P_v(\tau) := \frac{1}{4\pi v} R_{-2} F_v(\tau).
\]
Then $P_v(\tau)$ is a weak Maass form of weight $0$ and Laplace eigenvalue $-2$.
By \eqref{eq:F-v-poincare} and Proposition 2.2 of \cite{BO:AlgebraicFormulas} this is equivalent to defining
\begin{align*}
	P_v(\tau) :=& \sum_{d|6} \mu(d) \F_v (W_d\,\tau,2,0) \\
	=& \frac{1}{6}\sum_{d\mid 6} \mu(d) \sum_{\gamma\in \Gamma_\infty \backslash \Gamma} \phi_v\left(\gamma W_d \, \tau,2,0\right).
\end{align*}
Similarly, we define $P_v(\tau,s)$ as
\begin{align}
	P_v(\tau,s) :=& C(s) \sum_{d|6} \mu(d) \F_v (W_d\,\tau,s,0) \notag \\
	=& \frac{C(s)}{\Gamma(2s)}\sum_{d\mid 6} \mu(d) \sum_{\gamma\in \Gamma_\infty \backslash \Gamma} \phi_v\left(\gamma W_d \, \tau,s,0\right), \label{eq:def-p-v-tau-s}
\end{align}
where
\begin{equation}
	C(s) := \frac{2^s}{\pi} \Gamma\pfrac{s+1}{2}^2.
\end{equation}
We have chosen the non-standard normalizing factor $C(s)$ so that later results are cleaner to state. Note that $C(2)=1$, so
\begin{equation} \label{eq:P-v-tau-2=P-v-tau}
	P_v(\tau,2)=P_v(\tau).
\end{equation}
In the next section we will define the dampened functions $P_{v,Q}(\tau,s)$.

\section{Binary quadratic forms and the functions $P_{v,Q}(\tau,s)$} \label{sec:quadratic}

In this section we recall some basic facts about binary quadratic forms and the genus characters $\chi_m$. A good reference for this material is Section I of \cite{GKZ:Heegner}. Throughout this section, we assume that $m,n\equiv 1\pmod{24}$ and that $m$ is squarefree. The latter condition ensures that $m$ is a fundamental discriminant.

Suppose that $r\in\{1,5,7,11\}$. 
We recall that
\[
	\sQ_n^{(r)} := \left\{ ax^2+bxy+cy^2: b^2-4ac=n, \ 6\mid a, \ b\equiv r\pmod*{12}, \text{ and }a>0 \text{ if }n<0 \right\}.
\] 
Let $\Gamma^*$ denote the group generated by $\Gamma=\Gamma_0(6)/\{\pm 1\}$ and the Atkin-Lehner involutions $W_d$ for $d\mid 6$.
Matrices $\gamma=\pmatrix ABCD\in\Gamma^*$ act on such forms on the left by
\[
	\gamma Q(x,y) := \frac{1}{\det \gamma}Q(Dx-By,-Cx+Ay).
\]
It is easy to check that this action is compatible with the action $\gamma\tau:=\frac{A\tau+B}{C\tau+D}$ on the roots of $Q$:
for all $\gamma\in \Gamma^*$, we have
\begin{equation} \label{eq:root-compatible}
	\gamma\,\tau_Q = \tau_{\gamma Q}.
\end{equation}
The set $\Gamma \backslash \sQ_n^{(r)}$ forms a finite group under Gaussian composition which is isomorphic to the narrow class group of $\Q(\sqrt{n})/\Q$ when $n$ is a fundamental discriminant.
Let $\sQ_n$ denote the union
\[
	\sQ_n := \bigcup_{r\in \{1,5,7,11\}} \sQ_n^{(r)}.
\]

For $d\mid 6$, the Atkin-Lehner involution $W_d=\pmatrix{d\alpha}{\beta}{6\gamma}{d\delta}$ acts on quadratic forms by
\begin{equation} \label{eq:atkin-lehner-q}
	W_d \, Q(x,y) := \frac 1d \, Q(d\delta x-\beta y,-6\gamma x+d\alpha y).
\end{equation}
A computation involving \eqref{eq:atkin-lehner-q} and the relation $d\alpha\delta - \frac 6d \beta\gamma=1$ shows that
\begin{equation} \label{atkin-lehner-q-2}
	W_d \, [6a,b,c] = \left[6*, b\left(1+\tfrac {12}d \beta\gamma\right)+12*,*\right].
\end{equation}
It is convenient to choose $W_2=\pmatrix{2}{-1}{6}{-2}$ and $W_3=\pmatrix{3}{1}{6}{3}$. Then \eqref{atkin-lehner-q-2} shows that
\begin{equation} \label{eq:W-d-r-r'}
	W_d : \sQ_n^{(r)} \longleftrightarrow \sQ_n^{(r')}
\end{equation}
is a bijection, where
\begin{equation} \label{eq:r'-r-cases}
	r'\equiv r\times
	\begin{cases}
		1 & \text{ if } d=1, \\
		7 & \text{ if } d=2, \\
		5 & \text{ if } d=3, \\
		11 & \text{ if } d=6,
	\end{cases}
	\pmod{12}.
\end{equation}
Moreover, we have
\begin{equation} \label{eq:Qn-decomp-W-d}
	\sQ_n = \bigcup_{d|6} W_d \, \sQ_n^{(r)}
\end{equation}
for any $r\in \{1,5,7,11\}$.

We turn now to the extended genus character $\chi_m$. For $Q\in \sQ_{mn}$, define
\begin{equation} \label{eq:def-chi-m}
	\chi_m(Q) := 
	\begin{cases}
		\pfrac{m}{r} & \text{ if $(a,b,c,m)=1$ and $Q$ represents $r$ with $(r,m)=1$}, \\
		0 & \text{ if $(a,b,c,m)>1$}.
	\end{cases}
\end{equation}
The following lemma lists some properties of $\chi_m$.
\begin{lemma} \label{lem:chi-props}
Suppose that $m,n\equiv 1\pmod{24}$ and that $m$ is squarefree.
\begin{enumerate}[P1\textup{)}]
	\item The map $\chi_m:\Gamma \backslash \sQ_{mn}\to\{-1,0,1\}$ is well-defined; 
	i.e. $\chi_m(\gamma Q)=\chi_m(Q)$ for all $\gamma\in \Gamma$.
	\item If $(a,a')=1$ then
	\[
		\chi_m([6aa',b,c]) = \chi_m([6a,b,a'c])\chi_m([6a',b,ac]).
	\]
	\item For each $d\mid 6$ we have
	\[
		\chi_m(Q) = \chi_m(W_d\,Q).
	\]
	\item Suppose that $[6a,b,c]\in \sQ_{mn}$, and let $g:=\pm(a,m)$, where the sign is chosen so that $g\equiv 1\pmod4$. Then
	\[
		\chi_m([6a,b,c]) = \pfrac{m/g}{6a}\pfrac{g}{c}.
	\]
	\item We have $\chi_m(-Q)=\sgn(m) \chi_m(Q)$.
\end{enumerate}
\end{lemma}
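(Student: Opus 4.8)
The plan is to establish the explicit formula P4) first, and then to read off P1), P3), and P5) directly from the defining relation \eqref{eq:def-chi-m} together with elementary properties of the Kronecker symbol, leaving only a short reciprocity computation for P2). Throughout fix $g=\pm(a,m)$ with $g\equiv1\pmod4$ as in P4); since $m$ is squarefree this sign choice pins down $g$ uniquely and makes $g$ and $m/g$ coprime integers with $g\cdot(m/g)=m$. For a form $[6a,b,c]\in\sQ_{mn}$ I would first record three elementary facts. Because $g\mid a$ and $g\mid m$ both divide the relation $b^2-24ac=mn$ and $g$ is squarefree, we get $g\mid b$; because $(a,m)=g$ with $m$ squarefree we get $(a,m/g)=1$; and in the case $(6a,b,c,m)=1$ we have $(c,g)=1$ (a common prime factor of $c$ and $g$ would divide $6a$, $b$, $c$ and $m$), while conversely $(c,g)>1$ forces $(6a,b,c,m)>1$, so both sides of P4) vanish together. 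The one computational input is the identity $24a\,[6a,b,c](x,y)=(12ax+by)^2-mn\,y^2$, which gives $24a\,Q(x,y)\equiv(12ax+by)^2\pmod m$.

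To prove P4) in the nonvanishing case, choose (as one may, since $(6a,b,c,m)=1$) integers $x_0,y_0$ with $r:=Q(x_0,y_0)$ coprime to $m$, so that $\chi_m(Q)=\pfrac mr$. Multiplicativity of the Kronecker symbol in the numerator gives $\pfrac mr=\pfrac gr\pfrac{m/g}r$. Reducing the quadratic identity modulo $m/g$ and using $(a,m/g)=1$ shows that $24a\,r$ is a square modulo $m/g$, whence $\pfrac{m/g}r=\pfrac{m/g}{24a}=\pfrac{m/g}{6a}$, the factor $4$ being a square that drops out. Reducing $Q\equiv cy^2\pmod g$ (valid since $g\mid a$ and $g\mid b$) and using $(c,g)=1$ shows $\pfrac gr=\pfrac g{cy_0^2}=\pfrac gc$. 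Multiplying the two gives P4). Read as the assertion that $\pfrac mr$ is independent of the admissible choice of $r$, this same computation is precisely the well-definedness of $\chi_m$.

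The remaining properties follow quickly. For P1), each $\gamma\in\Gamma\subset\PSL_2(\Z)$ acts by a proper $\SL_2(\Z)$-substitution (the substitution matrix has determinant $\det\gamma=1$), so $\gamma Q$ and $Q$ represent the same integers and have the same content; hence $\chi_m(\gamma Q)=\chi_m(Q)$ straight from \eqref{eq:def-chi-m}. For P3), the substitution \eqref{eq:atkin-lehner-q} realizes $W_dQ$ as $\tfrac1d Q$ restricted to a sublattice of index $d$, so every value $r$ of $W_dQ$ coprime to $m$ satisfies $dr=Q(v)$ for some $v$; since $(d,m)=1$ the content condition is preserved, and in the nonvanishing case $\chi_m(Q)=\pfrac m{dr}=\pfrac md\,\chi_m(W_dQ)$ with $\pfrac md=1$ for every $d\mid6$ because $m\equiv1\pmod{24}$. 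For P5), $-Q$ represents $-r$ whenever $Q$ represents $r$, with the same content, so $\chi_m(-Q)=\pfrac m{-r}=\pfrac m{-1}\pfrac mr=\sgn(m)\,\chi_m(Q)$. Finally P2) follows from P4): writing $g_a=(a,m)$ and $g_{a'}=(a',m)$, the hypothesis $(a,a')=1$ and squarefreeness of $m$ give $(aa',m)=g_ag_{a'}$, and expanding the three instances of P4) reduces the identity to a finite collection of Kronecker-symbol relations that close using $m\equiv1\pmod{24}$ (so that $\pfrac m6=1$) together with $g_a,g_{a'}\equiv1\pmod4$.

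I expect the only delicate point to be the reciprocity bookkeeping in P2), where several symbols must be recombined under quadratic reciprocity and the sign normalization $g\equiv1\pmod4$ is exactly what keeps the individual factors $\pfrac{m/g}{6a}$ and $\pfrac gc$ (and their analogues) unambiguous; everything else is a direct consequence of the definition \eqref{eq:def-chi-m} and the single identity $24a\,Q=(12ax+by)^2-mn\,y^2$.
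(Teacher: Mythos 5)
Your proposal is correct, but it takes a genuinely different route from the paper. The paper does not prove P1, P2, P4 (or P3 for $d\in\{1,6\}$) at all: it cites them as special cases of Proposition 1 of Gross--Kohnen--Zagier, deduces P5 from P4, and only supplies an argument for P3 with $d=2,3$ --- and that argument is computational, using the explicit matrices $W_2=\pmatrix 2{-1}6{-2}$, $W_3=\pmatrix 3163$ together with the P4 formula and identities like $8a(2a+b+3c)=(4a+b)^2-mn$. You instead make the lemma self-contained: you prove P4 first from the identity $24a\,Q(x,y)=(12ax+by)^2-mn\,y^2$ (which simultaneously gives the independence of $\pfrac mr$ from the chosen representative $r$, i.e.\ well-definedness), then read off P1, P3, P5 from the represented-values definition, and expand P4 for P2. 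Your P3 argument is the most notable divergence and is arguably cleaner: observing that $Q$ represents $dr$ whenever $W_dQ$ represents $r$, that primes dividing $m$ cannot divide $\det W_d$, and that $\pfrac md=1$ for all $d\mid 6$ handles every Atkin--Lehner involution uniformly, with no case analysis and no matrix computations; it would also reprove the $d=1,6$ cases the paper takes from the literature. What the paper's approach buys is brevity (given the citation); what yours buys is a complete elementary proof. Two small remarks: your anticipation that P2 requires ``reciprocity bookkeeping'' is too pessimistic --- writing $g_1=g_2g_3$ (which is where the normalization $g\equiv 1\pmod 4$ enters) and expanding, everything cancels by multiplicativity of the Kronecker symbol in both arguments, the coprimality facts $(g_2,a')=(g_3,a)=1$, and finally $\pfrac m6=1$; no reciprocity is needed. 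Also, you should note explicitly that $m/g\equiv 1\pmod 4$ (forced by $m\equiv g\equiv 1\pmod 4$), since the substitutions $r\mapsto 24a$ and $r\mapsto cy_0^2$ inside $\pfrac{m/g}{\cdot}$ and $\pfrac g\cdot$ rely on these symbols being characters modulo $|m/g|$ and $|g|$ respectively.
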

\begin{proof}
Property P3 for $d=1,6$ and P1, P2, and P4 are special cases of Proposition 1 of \cite{GKZ:Heegner}. Property P5 follows easily from P4. A generalization of P3 is stated without proof in \cite{GKZ:Heegner}, so we provide a proof here for our special case.

We want to show that P3 holds for $d=2,3$. Suppose that
\[
	 Q=[6a,b,c] \text{ with } (a,b,c,m)=1. 
\]
Choosing $W_2=\pmatrix 2{-1}6{-2}$ and $W_3=\pmatrix 3163$, we find that
\begin{gather*}
	W_2\,[6a,b,c] = [6(2a+b+3c),*,3a+b+2c], \\
	W_3\,[6a,b,c] = [6(3a-b+2c),*,2a-b+3c].
\end{gather*}
We will use P4. For $W_2$, we want to show that
\begin{equation} \label{eq:W-2-want}
	\pfrac{m/g}{6(2a+b+3c)}\pfrac{g}{3a+b+2c} = \pfrac{m/g}{6a}\pfrac{g}{c}.
\end{equation}
Since $g$ divides $mn+4ac=b^2$ and $g$ is squarefree, we see that $g\mid b$, so
\begin{equation} \label{eq:g-3a-b-2c}
	\pfrac{g}{3a+b+2c} = \pfrac{g}{c}\pfrac{g}{2}.
\end{equation}
If $a=0$ then $g=m$, and \eqref{eq:W-2-want} follows from \eqref{eq:g-3a-b-2c} and the fact that $m\equiv 1\pmod 8$.
If $a\neq 0$, then the relation
\[
	8a(2a+b+3c)=(4a+b)^2-m
\]
shows that
\[
	\pfrac{m/g}{6(2a+b+3c)}\pfrac{m/g}{6a}=\pfrac{m/g}{2}.
\]
Together with \eqref{eq:g-3a-b-2c}, this completes the proof for $W_2$.
The proof for $W_3$ is similar.
\end{proof}

The remainder of this section follows Sections 3 and 4 of \cite{DIT:CycleIntegrals} and Section 3 of \cite{Andersen:Geodesics}.
Let $\Gamma_Q$ denote the stabilizer of $Q$ in $\Gamma=\Gamma_0(6)/\{\pm1\}$. 
When the discriminant of $Q$ is negative or a positive square, the group $\Gamma_Q$ is trivial. 
However, when the discriminant $n>0$ is not a square, the group $\Gamma_Q$ is infinite cyclic. If $Q=[a,b,c] \in \sQ_n$ with $(a,b,c)=1$, we have $\Gamma_Q=\langle g_Q\rangle$, where
\[
	g_Q := \pMatrix{\frac{t+bu}2}{cu}{-au}{\frac{t-bu}2}
\]
and $t,u$ are the smallest positive integral solutions to Pell's equation $t^2-nu^2=4$. 
When $(a,b,c)=\delta>1$, we have $\Gamma_Q=\langle g_{Q/\delta} \rangle$.

For $Q=[a,b,c]\in \sQ_n$ with $n>0$, let $S_Q$ denote the geodesic in $\H$ connecting the two roots of $Q(\tau,1)$.
Explicitly, $S_Q$ is the curve in $\H$ defined by
\[
	a|\tau|^2 + b\re(\tau) + c = 0.
\]
When $a\neq 0$, $S_Q$ is a semicircle, which we orient counter-clockwise if $a>0$ and clockwise if $a<0$.
When $a=0$, $S_Q$ is the vertical line $\re(\tau)=-c/b$, which we orient upward.
If $\gamma\in \Gamma$ then we have
\begin{equation} \label{eq:gamma-S-Q}
	\gamma S_Q=S_{\gamma Q}
\end{equation}
Fix any $z\in S_Q$ and define the cycle $C_Q$ as the directed arc on $S_Q$ from $z$ to $g_Q z$.
We define
\begin{equation} \label{eq:d-tau-q-def}
	d\tau_Q := \frac{\sqrt{n}\, d\tau}{Q(\tau,1)},
\end{equation}
so that if $\tau'=\gamma\tau$ for some $\gamma\in \Gamma^*$, we have
\begin{equation} \label{eq:d-tau-prime}
	d\tau'_{\gamma Q} = d\tau_Q.
\end{equation}

Suppose that $Q$ has positive non-square discriminant and that $f$ is a $\Gamma$-invariant function that is continuous on $S_Q$.
A straightforward generalization of Lemma 6 of \cite{DIT:CycleIntegrals} shows that the integral
\[
	\int_{C_Q} f(\tau) d\tau_Q
\]
is a well-defined (i.e. independent of the choice of $z\in S_Q$) invariant of the equivalence class of $Q$.

We now define the functions $P_{v,Q}(\tau,s)$. Let $Q=[a,b,c]$ be a binary quadratic form with square discriminant.
Then the equation $Q(x,y)=0$ has two inequivalent solutions $[r_1:s_1]$ and $[r_2:s_2]$ in $\P^1(\Q)$, which we write as fractions $\mathfrak{a}_i:=r_i/s_i$, with $(r_i,s_i)=1$ and possibly $s_i=0$. 
For each $i$, there is a unique $d_i$ such that
\[
	W_{d_i}\, \mathfrak a_i \sim_\Gamma \infty.
\]
Thus, up to translation, there is a unique $\gamma_i\in \Gamma$ such that
\[
	\gamma_i W_{d_i}\, \mathfrak a_i = \infty.
\]
The function $P_{v,Q}(\tau,s)$ is defined by deleting the two terms of $P_v(\tau,s)$ in \eqref{eq:def-p-v-tau-s} corresponding to the pairs $(\gamma_i,W_{d_i})$; that is,
\begin{equation} \label{eq:def-p-v-q}
	P_{v,Q}(\tau,s) := \frac{C(s)}{\Gamma(2s)}\sum_{d|6} \mu(d) \sum_{\substack{\gamma\in \Gamma_\infty\backslash \Gamma \\ \gamma W_d\, \mathfrak{a}_i \neq \infty}} \phi_v(\gamma W_d\,\tau,s,0).
\end{equation}
Suppose that $\sigma\in \Gamma$. Then by \eqref{eq:root-compatible}, the roots of $\sigma Q$ are $\sigma \mathfrak a_1$ and $\sigma \mathfrak a_2$, and we find (using \eqref{eq:W-d-normalizes}) that
\[
	P_{v,\sigma Q}(\sigma\tau,s) = P_{v,Q}(\tau,s).
\]
Together with \eqref{eq:gamma-S-Q} and \eqref{eq:d-tau-prime}, this shows that the integral
\[
	\int_{C_Q} P_{v,Q}(\tau,s) \frac{d\tau}{Q(\tau,1)}
\]
is invariant under $Q\mapsto\sigma Q$.

\section{Kloosterman sums and the proof of Theorem \ref{thm:kloosterman}} \label{sec:kloosterman}

In this section we prove an identity (Proposition \ref{prop:s-k} below) connecting the Kloosterman sum \eqref{eq:def-kloo} with the twisted quadratic Weyl sum \eqref{eq:def-S-v}. 
This is an essential ingredient in the proof of Theorem \ref{thm:main}, and is equivalent to the evaluation of the Kloosterman sum in Theorem~\ref{thm:kloosterman}.

Throughout this section, $v$ is a positive integer coprime to $6$ and $m,n\equiv 1\pmod{24}$ with $m$ squarefree.
We will use the notation
\[
	a' := \frac{a-1}{24}
\]
whenever $a\equiv 1\pmod{24}$.
The Kloosterman sum is defined as
\[
	K(a,b;c) := \sum_{d(c)^*} e^{\pi i s(d,c)} e\pfrac{\bar da + db}{c},
\]
where $d(c)^*$ indicates that the sum is taken over residue classes coprime to $c$, and $\bar d$ denotes the inverse of $d$ modulo $c$.
The factor $e^{\pi i s(d,c)}$ makes the Kloosterman sum very difficult to evaluate.
The following lemma shows that $e^{\pi i s(d,c)}$ is related to the Gauss-type sums
\begin{equation} \label{eq:def-H-d-c}
	H_{d,c}(\delta) := \frac 12 \sum_{j(2c)} e\pfrac{d(6j+\delta)^2}{24c}
\end{equation}
which were introduced by Fischer in \cite{Fischer:Dedekind}.

\begin{lemma} \label{lem:s-d-c-H}
Suppose that $(v,6)=(c,d)=1$ and define
\begin{align*}
	\alpha &:= 1-\bar{d}c-\bar{d}v, \\
	\beta &:= 1-\bar{d}c+\bar{d}v,
\end{align*}
with $\bar d$ chosen such that
\begin{equation*}
	d \bar d \equiv 
	\begin{cases}
		1\pmod{c} & \text{ if $c$ is odd},\\
		1\pmod{2c} & \text{ if $c$ is even}.
	\end{cases}
\end{equation*}
Then we have
\begin{equation} \label{eq:s-d-c-H}
	\sqrt{3c} \pfrac{12}{v} e\pfrac{\bar{d}(v^2-1)}{24c} e^{\pi i\,s(d,c)} 
	= e\pfrac{2v+d\alpha^2}{24c} H_{-d,c}(\alpha) + e\pfrac{-2v+d\beta^2}{24c} H_{-d,c}(\beta).
\end{equation}
\end{lemma}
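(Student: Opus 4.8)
The plan is to read this off from the modular transformation of the Dedekind eta function, which admits two independent descriptions whose comparison is exactly \eqref{eq:s-d-c-H}. On one hand, for $\gamma=\pmatrix abcd$ with $c>0$ the classical transformation law $\eta(\gamma\tau)=\exp\!\bigl(\pi i\tfrac{a+d}{12c}\bigr)e^{-\pi i s(d,c)}\{-i(c\tau+d)\}^{1/2}\eta(\tau)$ exhibits $e^{\pi i s(d,c)}$ directly through the eta-multiplier. On the other hand, Euler's pentagonal number theorem gives the theta representation $\eta(\tau)=\sum_{n\in\Z}(-1)^n e\pfrac{(6n+1)^2\tau}{24}$, and transforming this series term by term through Poisson summation produces quadratic Gauss sums; these are precisely the sums $H_{-d,c}(\delta)$ of \eqref{eq:def-H-d-c}. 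Matching the two descriptions at the Fourier frequency $v$ is what yields the stated identity.

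First I would fix a matrix $\gamma=\pmatrix abcd\in\SL_2(\Z)$ realizing the given data, so that the lower-right entry is $d$ and $a\equiv\bar d$ modulo $c$ (resp. $2c$), matching the congruence normalization of $\bar d$ in the statement. Writing $\gamma\tau=\tfrac ac-\tfrac1{c(c\tau+d)}$ and substituting the pentagonal series gives $\eta(\gamma\tau)=\sum_n(-1)^n e\pfrac{a(6n+1)^2}{24c}\,e\pfrac{-(6n+1)^2}{24c(c\tau+d)}$. Since $6n+1$ has period $2c$ modulo $12c$, I would split $n$ into residue classes $j$ modulo $2c$ — on which $(-1)^n$ and the factor $e\pfrac{a(6n+1)^2}{24c}$ are constant — and apply Poisson summation to the remaining integer parameter. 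Completing the square in each class turns the rapidly oscillating factor into $(c\tau+d)^{1/2}\eta(\tau)$ times a finite Gauss sum $G(k)=\sum_{j(2c)}(-1)^j e\pfrac{a(6j+1)^2}{24c}e\pfrac{k(6j+1)}{12c}$ attached to each dual frequency $k$, so that $\eta(\gamma\tau)=\tfrac{(c\tau+d)^{1/2}}{\sqrt{12c}}\,(\text{phase})\sum_k e\pfrac{k^2\tau}{24}e\pfrac{k^2 d}{24c}G(k)$.

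Now comes the role of $v$. Comparing the coefficient of $e\pfrac{v^2\tau}{24}$ on the two sides of the transformation law isolates exactly the two dual frequencies $k=\pm v$, since $k^2=v^2$ forces $k=\pm v$. On the $\eta$ side this coefficient is the pentagonal sign $(-1)^{(v-1)/6}$, which equals $\pfrac{12}{v}$ because $(v,6)=1$; this is the source of the factor $\pfrac{12}{v}$. On the transformed side the coefficient is $G(v)+G(-v)$ up to the Poisson phase. Applying quadratic Gauss sum reciprocity to $G(\pm v)$ — which inverts the leading coefficient $a$ into $-d$ via $ad\equiv1$ and sends the linear term $\pm v$ into the shift $\mp\bar dv$ — rewrites $G(v)$ and $G(-v)$ as the sums $H_{-d,c}(\alpha)$ and $H_{-d,c}(\beta)$ with $\alpha=1-\bar dc-\bar dv$ and $\beta=1-\bar dc+\bar dv$, and simultaneously produces the quadratic phases $e\pfrac{2v+d\alpha^2}{24c}$, $e\pfrac{-2v+d\beta^2}{24c}$, the factor $\sqrt{3c}$, and the phase $e\pfrac{\bar d(v^2-1)}{24c}$ (the latter measured relative to the base frequency $v=1$). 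Combining these with the eta-multiplier phase $\exp\!\bigl(\pi i\tfrac{a+d}{12c}\bigr)$ and solving for the multiplier gives \eqref{eq:s-d-c-H}.

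The main obstacle is the disciplined tracking of phases and of the branch of the square root through both the Poisson summation and the Gauss sum reciprocity: the constants $\sqrt{3c}$, $\pfrac{12}{v}$, and $e\pfrac{\bar d(v^2-1)}{24c}$ must all emerge with exactly the right roots of unity, and reconciling the analytic normalization of the theta transformation with the arithmetic normalization of the Dedekind-sum multiplier in \eqref{eq:def-dedekind-sum} is where the computation is most delicate. A secondary point is that the congruences for $\bar d$ in the odd versus even $c$ cases must be carried in parallel, and one must verify that only the frequencies $k=\pm v$ survive — i.e.\ that $G(k)=0$ for $k\not\equiv\pm1\pmod6$ — so that the right-hand side collapses to precisely the two terms $H_{-d,c}(\alpha)$ and $H_{-d,c}(\beta)$ rather than a longer sum.
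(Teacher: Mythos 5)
Your proposal is sound but follows a genuinely different route from the paper. The paper's proof is a reduction, not a derivation: it sets $f(v):=\pfrac{12}{v}e\pfrac{2v+d\alpha^2-\bar{d}(v^2-1)}{24c}H_{-d,c}(\alpha)$, notes that $H_{-d,c}(\alpha)$ depends only on $\alpha\bmod 6$ (immediate from \eqref{eq:def-H-d-c}, since $\delta\mapsto\delta+6$ merely reindexes $j$) and that, after a short computation using $v^2\equiv1\pmod{24}$ and the parity normalization of $\bar d$, the exponential prefactor also depends only on $v\bmod 6$; hence $f(v)+f(-v)$ is independent of $v$, and the identity follows from the case $v=1$, which is Selberg's identity as proved in Section 4 of \cite{Whiteman}. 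Your plan instead rederives everything from scratch --- eta transformation, pentagonal series, Poisson summation in residue classes $j\bmod 2c$, and comparison of the coefficients of $e\pfrac{v^2\tau}{24}$ (legitimate: after dividing by $\{-i(c\tau+d)\}^{1/2}$ both expansions are $24$-periodic in $x$, and $k^2=v^2$ forces $k=\pm v$) --- which is essentially Whiteman's own method for $v=1$ extended to all $v$. Your route is self-contained and explains the shape of the identity (the $\pfrac{12}{v}$ from the pentagonal signs, the $-\bar dc$ in $\alpha$ from absorbing $(-1)^j$, the $\mp\bar dv$ from completing the square, and the $\pmod*{2c}$ normalization of $\bar d$ as exactly what completing the square requires); the paper's route buys brevity by avoiding all of the phase and branch bookkeeping at the cost of citing Whiteman twice.

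One step of yours needs repair. Poisson summation produces Gauss sums whose quadratic parameter is the \emph{upper-left} entry $a\equiv\bar d$ of your matrix, while the right-hand side of \eqref{eq:s-d-c-H} has parameter $-d$; you propose to pass between them by ``Gauss sum reciprocity.'' Reciprocity is not the right tool here, and the natural alternative (reindexing $6j+1$ by multiplication by $d$) fails whenever $(d,6)>1$, which is allowed since only $(c,d)=1$ is assumed. The clean fix is to run your whole argument on the matrix $\pmatrix{-d}{b}{c}{d^*}\in\SL_2(\Z)$ with $d^*\equiv-\bar d\pmod*{2c}$: the Poisson side then produces parameter $-d$ sums directly; completing the square (using $d\bar d\equiv1\pmod*{2c}$, which is where the even-$c$ normalization of $\bar d$ enters) yields exactly the shifts $\alpha=1-\bar dc-\bar dv$, $\beta=1-\bar dc+\bar dv$ and the phases $e\pfrac{2v+d\alpha^2}{24c}$, $e\pfrac{-2v+d\beta^2}{24c}$; the Dedekind sum symmetries $s(\bar d,c)=s(d,c)$ and $s(-d,c)=-s(d,c)$ turn the multiplier $e^{-\pi i\,s(d^*,c)}$ into the required $e^{+\pi i\,s(d,c)}$; and the factors $e\pfrac{v^2d^*}{24c}$ and $e\pfrac{a+d^*}{24c}$ combine, via $v^2\equiv1\pmod{24}$, to give $e\pfrac{\bar d(v^2-1)}{24c}$. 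With that rerouting, your plan goes through.
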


\begin{proof}
Define
\[
	f(v):= \pfrac{12}{v} e\pfrac{2v+d\alpha^2-\bar{d}(v^2-1)}{24c} H_{-d,c}(\alpha).
\]
We will prove \eqref{eq:s-d-c-H} by showing that $f(v)+f(-v)=\sqrt{3c}\,e^{\pi i\,s(d,c)}$.

We first show that for fixed $d,c$, the function $f(v)$ depends only on $v\bmod 6$. By (3.8) of \cite{Whiteman} we find that $H_{-d,c}(\alpha)$ depends only on $\alpha\bmod 6$. Define $\ep\in \{-1,1\}$ by $v\equiv \ep \pmod{6}$. Then $\pfrac{12}{v}=e\pfrac{v-\ep}{12}$, and we have
\begin{multline*}
	\pfrac{12}{v}e\pfrac{2v+d\alpha^2-\bar{d}(v^2-1)}{24c} \\*
	= e\left( v^2 \frac{\bar{d}(d\bar d-1)/c}{24} - v\frac{(d\bar d-1)/c-d\bar d^2-1}{12} - \frac{\ep}{12} + \frac{d(\bar d c-1)^2+\bar d}{24c} \right).
\end{multline*}
This depends only on $v\bmod 6$ since $v^2\equiv 1\pmod{24}$ and, by definition,
\[
	\frac{d\bar d-1}c-d\bar d^2-1 \in 2\Z.
\]

Now $f(v)+f(-v)=f(\ep)+f(-\ep)$ is independent of $v$, so to prove \eqref{eq:s-d-c-H} it suffices to show that $f(1)+f(-1)=\sqrt{3c}\,e^{\pi i\,s(d,c)}$. This is proved in Section 4 of \cite{Whiteman}.
\end{proof}

The quadratic Weyl sum $S_v(m,n;24c)$ is defined as
\[
	S_v(m,n;24c) := \sum_{\substack{b(24c)\\b^2\equiv mn(24c)}} \chi_{12}(b) \chi_m\left([6c,b,\tfrac{b^2-mn}{24c}]\right) e\pfrac{bv}{12c},
\]
where $\chi_m$ is defined in \eqref{eq:def-chi-m}.
We clearly have $S_v(m,n;24c) = \overline{S_v(m,n;24c)}$, so the exponential $e\pfrac{bv}{12c}$ may be replaced by $\cos\pfrac{bv\pi}{6c}$.
When $m=1$, we obtain a simpler expression for $S_1(1,n;24c)$ as follows. The summands of $\re(S_v(1,n;24c))$ are invariant under both $b\mapsto b+12c$ and $b\mapsto -b$, so we may sum over those $b$ modulo $12c$ for which $b\equiv 1\pmod{6}$, and multiply the sum by $4$. Writing $b=6\ell+1$, we obtain (cf. formula \eqref{eq:selberg-kloo})
\begin{equation} \label{eq:m-v-1-whiteman}
	S_1(1,n;24c) = 4\sum_{\substack{\ell\bmod 2c\\(3\ell^2+\ell)/2\equiv n'(c)}} (-1)^\ell \cos\pfrac{(6\ell+1)v\pi}{6c}.
\end{equation}

The following proposition gives an expression for $S_v(m,n;24c)$ in terms of Kloosterman sums.
Its proof occupies the remainder of the section.
Theorem~\ref{thm:kloosterman} follows from \eqref{eq:s-k} by M\"obius inversion in two variables.

\begin{proposition} \label{prop:s-k}
Suppose that $m,n\equiv 1\pmod{24}$ and that $m$ is squarefree. Suppose that $c,v>0$ and that $(v,6)=1$. Then
\begin{equation} \label{eq:s-k}
	S_v(m,n;24c) = 4\sqrt{3} \sum_{u|(v,c)} \pfrac{12}{v/u}\pfrac mu \sqrt{\frac uc} \, K\left( \left(\tfrac{v^2}{u^2}\,m\right)', n'; \frac cu \right).
\end{equation}
\end{proposition}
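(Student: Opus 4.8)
The plan is to expand the twisted quadratic Weyl sum $S_v(m,n;24c)$ by parametrizing the solutions $b \bmod 24c$ of $b^2 \equiv mn \pmod{24c}$ in terms of the representation theory of the quadratic form $[6c,b,\frac{b^2-mn}{24c}]$, and then to recognize the resulting exponential sum as a Kloosterman sum via Lemma \ref{lem:s-d-c-H}. First I would use property P4 of Lemma \ref{lem:chi-props} to write $\chi_m\bigl([6c,b,\tfrac{b^2-mn}{24c}]\bigr)$ explicitly as a product of Jacobi symbols $\pfrac{m/g}{6c}\pfrac{g}{*}$, isolating the divisor $g = \pm(c,m)$. Because $m$ is squarefree and $(v,6)=1$, the structure of $g$ is what ultimately produces the divisor sum over $u \mid (v,c)$ on the right-hand side: each common factor of $v$ and $c$ contributes a term, and the M\"obius-free factors $\pfrac{m}{v/u}$ arise from the genus-character relation combined with quadratic reciprocity applied to $\pfrac{12}{v}$ and the Legendre symbols in $\chi_m$.

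Next I would convert the congruence condition $b^2 \equiv mn \pmod{24c}$ into a sum over $d \bmod c$ coprime to $c$. The bridge is the following: since $mn \equiv 1 \pmod{24}$, the square roots $b$ modulo $24c$ that are coprime to the relevant modulus correspond bijectively to units $d$ via the parametrization $b \equiv \bar d \cdot (\text{const}) \pmod{c}$, with the residue of $b$ modulo $24$ fixed by the plus-space condition $b \equiv 1 \pmod 6$. Writing $b = 6\ell + 1$ as in \eqref{eq:m-v-1-whiteman} and tracking the lift to modulus $24c$, the phase $e\pfrac{bv}{12c}$ becomes a sum of two exponentials of the shape $e\pfrac{\pm 2v + d\alpha^2}{24c}$, precisely matching the right-hand side of \eqref{eq:s-d-c-H} with $\alpha = 1 - \bar d c - \bar d v$ and $\beta = 1 - \bar d c + \bar d v$. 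At this stage the Gauss-type sum $H_{-d,c}$ appears naturally because the count of $b$ in a fixed residue class modulo $24$ lifting a given $d$ is exactly the Fischer sum \eqref{eq:def-H-d-c}.

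The decisive step is then to invoke Lemma \ref{lem:s-d-c-H} in reverse: the combination $e\pfrac{2v+d\alpha^2}{24c} H_{-d,c}(\alpha) + e\pfrac{-2v+d\beta^2}{24c} H_{-d,c}(\beta)$ equals $\sqrt{3c}\,\pfrac{12}{v}\,e\pfrac{\bar d(v^2-1)}{24c}\,e^{\pi i s(d,c)}$. Summing over $d \bmod c$ with the character weights, the factor $e^{\pi i s(d,c)}$ reconstitutes the Dedekind-sum phase in the definition \eqref{eq:def-kloo} of $K$, and the phase $e\pfrac{\bar d(v^2-1)}{24c}$ together with the twisting exponent identifies the arguments of the Kloosterman sum as $\bigl(\tfrac{v^2}{u^2}m\bigr)'$ and $n'$ on modulus $c/u$. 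The $\sqrt{3c}$ normalization and the symbol $\pfrac{12}{v}$ account for the prefactor $4\sqrt3\,\pfrac{12}{v/u}\sqrt{u/c}$.

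I expect the main obstacle to be the careful bookkeeping at the divisor $g = (c,m)$ and the interplay between the modulus reductions $24c \to c$ and $c \to c/u$. Specifically, when $(c,m) > 1$ or $(v,c) > 1$, the square-root count of $b \bmod 24c$ splits according to the common divisor $u \mid (v,c)$, and one must verify that the genus character $\chi_m$ factors correctly through this splitting using properties P2 and P4 of Lemma \ref{lem:chi-props}; this is where the Legendre symbols $\pfrac{m}{v/u}$ and $\pfrac{m}{u}$ are generated, and where sign and reciprocity errors are easiest to make. Everything else is a matter of matching phases against \eqref{eq:s-d-c-H} and the definition of $K(a,b;c)$.
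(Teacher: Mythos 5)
There is a genuine gap at the heart of your proposal: the claimed bijection between square roots $b \bmod {24c}$ of $mn$ and units $d \bmod c$ does not exist. The number of solutions of $b^2\equiv mn\pmod{24c}$ is a divisor-bounded quantity (at most $2^{\omega(c)+3}$, say), whereas the number of units modulo $c$ is the Euler totient $\phi(c)$; already for $c=p$ a large prime with $\pfrac{mn}{p}=1$ you have two square roots against $p-1$ units. Consequently \eqref{eq:s-k} is not a term-by-term identity but a statement of massive cancellation in the $d$-sum, and it cannot be established by ``invoking Lemma \ref{lem:s-d-c-H} in reverse'' one $b$ at a time: the Fischer sums $H_{-d,c}$ only materialize after a complete summation over $d$, and they are oscillating Gauss-type sums, not the counting functions your sketch asserts (``the count of $b$ \ldots is exactly the Fischer sum'' is false on its face). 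A second misattribution: the divisor sum over $u\mid(v,c)$ does not come from the genus-character divisor $g=\pm(c,m)$ of property P4; it comes from degenerate terms in the harmonic analysis (gcd conditions under which quadratic Gauss sums $G(a,b,12u)$ vanish or collapse), and is present even when $(c,m)=1$.

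What the paper actually does is Kohnen's argument, which sidesteps any bijection. Both sides of \eqref{eq:s-k} are periodic in $v$ with period $12c$, so it suffices to equate Fourier transforms in $v$. The transform $L(h)$ of the left side collapses to the single term $2\,\chi_{12}(h)\,\chi_m\bigl(\bigl[6c,h,\tfrac{h^2-mn}{24c}\bigr]\bigr)$ when $h^2\equiv mn\pmod{24c}$, and to $0$ otherwise. On the right side, Lemma \ref{lem:s-d-c-H} converts the Dedekind-sum phase $e^{\pi i s(d,c/u)}$ into Fischer sums, the $v$-average then produces genuine quadratic Gauss sums, and the whole expression is packaged as $\frac 2c\chi_{12}(h)\sum_{u\mid c}\pfrac{m}{c/u}u^{-1}F_h(u)$ for an explicit exponential sum $F_h(u)$. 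The proof is completed by showing $F_h$ is multiplicative in $u$ and verifying the resulting identity locally at each prime power $p^\lambda\,\|\,c$, in three cases ($p$ odd with $p\nmid m$, $p$ odd with $p\mid m$, and $p=2$), using the explicit formula \eqref{eq:chi-explicit} for $\chi_m$. Your sketch is essentially the parametrization idea of T\'oth, Duke, and Jenkins, which the paper's remark explicitly notes is only known to handle special cases of \eqref{eq:s-k}; for general squarefree $m$ with the twist $\chi_m$, the Fourier-transform-plus-multiplicativity route is (per the paper) the only known proof, and your proposal does not supply the missing mechanism.
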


\begin{remark}
Proposition \ref{prop:s-k} resembles Proposition 3 of 
\cite{DIT:CycleIntegrals}, which is proved using a slight modification of Kohnen's argument in \cite[Proposition 5]{Kohnen:Fourier}.
Using an elegant idea of T\'oth \cite{Toth:Salie}, Duke \cite{Duke:Uniform} greatly simplified Kohnen's proof for the case $m=D=1$ (in the notation of \cite{DIT:CycleIntegrals}). 
Jenkins \cite{Jenkins:Kloosterman} later extended this argument to the case of general $m$.
However, Kohnen's argument remains the only proof of the general case.

Although special cases of \eqref{eq:s-k} are amenable to the methods of Duke and Jenkins, we prove Proposition \ref{prop:s-k} in full generality by adapting Kohnen's argument.
The proof is quite technical.
\end{remark}

In the proof of Proposition \ref{prop:s-k} we will encounter the quadratic Gauss sum
\begin{equation} \label{eq:def-gauss}
	G(a,b,c) := \sum_{x(c)} e\pfrac{ax^2+bx}{c}, \qquad c>0.
\end{equation}
For any $d\mid (a,c)$, we see by replacing $x$ by $x+c/d$ that
\begin{equation}
	G(a,b,c) = e\ptfrac{b}{d} G(a,b,c).
\end{equation}
This implies that $G(a,b,c)=0$ unless $d\mid b$. 
In that case,
\begin{equation} \label{eq:gauss-gcd}
	G(a,b,c) = d \cdot G\left(\frac ad, \frac bd, \frac cd\right).
\end{equation}
If $(a,c)=1$, we have the well-known evaluations (see Theorems 1.5.1, 1.5.2, and 1.5.4 of \cite{BEW:Gauss})
\begin{equation} \label{eq:gauss-eval}
	G(a,0,c) = 
	\begin{cases}
		0 & \text{ if } 2\mid\mid c, \\
		(1+i)\ep_a^{-1} \sqrt{c} \pfrac ca & \text{ if } 4\mid c, \\
		\ep_c \sqrt{c} \pfrac ac & \text{ if $c$ is odd,}
	\end{cases}
\end{equation}
where
\[
	\ep_a :=
	\begin{cases}
		1 & \text{ if } a\equiv 1\pmod{4},\\
		i & \text{ if } a\equiv 3\pmod{4}.
	\end{cases}
\]
If $4\mid c$ and $(a,c)=1$ then, by replacing $x$ by $x+c/2$, we find that $G(a,b,c)=0$ if $b$ is odd. 
If $b$ is even and $4\mid c$, or if $c$ is odd, then by completing the square and using \eqref{eq:gauss-eval}, we find that
\begin{equation} \label{eq:gauss-4c-eval}
 	G(a,b,c) = 
 	\begin{dcases}
 		e\pfrac{-\bar a\,b^2}{4c} (1+i) \ep_a^{-1} \sqrt c \pfrac ca & \text{ if $b$ is even and $4\mid c$}, \\
 		e\pfrac{-\bar{4a}\,b^2}{c} \ep_c \sqrt c \pfrac ac & \text{ if $c$ is odd.}
 	\end{dcases}
\end{equation} 
Finally, these Gauss sums satisfy the multiplicative property
\begin{equation} \label{eq:gauss-mult}
	G(a,b,qr) = G(ar,b,q)\,G(aq,b,r), \qquad (q,r)=1
\end{equation}
which is a straightforward generalization of \cite[Lemma 1.2.5]{BEW:Gauss}.

We will need an explicit formula for $\chi_m([6c,b,\frac{b^2-mn}{24c}])$, which follows from P4 of Lemma \ref{lem:chi-props} (see also Proposition 6 of \cite{Kohnen:Fourier}). For each odd prime $p$, let
\[
	p^*:=(-1)^{\frac{p-1}{2}}p
\]
so that $(\frac{a}{p})=(\frac{p^*}{a})$.
If $m$ is squarefree, then
\begin{equation} \label{eq:chi-explicit}
	\chi_m([6c,b,\tfrac{b^2-mn}{24c}]) = \prod_{\substack{p^\lambda || c \\ p\nmid m}} \pfrac{m}{p^\lambda} \prod_{\substack{p^\lambda || c \\ p|m}} \pfrac{m/p^*}{p^\lambda} \pfrac{p^*}{(b^2-mn)/p^\lambda}.
\end{equation}

\begin{proof}[Proof of Proposition \ref{prop:s-k}]
Both sides of \eqref{eq:s-k} are periodic in $v$ with period $12c$, so it suffices to show that their Fourier transforms are equal. For each $h\in \Z$ we will show that
\begin{multline} \label{eq:fourier-transforms}
	\frac{1}{12c} \sum_{v(12c)} e\pfrac{-hv}{12c} S_v(m,n;24c) \\*
	= \frac{4\sqrt 3}{12c} \sum_{v(12c)} e\pfrac{-hv}{12c} \sum_{u|(c,v)} \pfrac{12}{v/u}\pfrac mu \sqrt{\frac uc} \, K\left( \left(\tfrac{v^2}{u^2}\,m\right)', n'; \frac cu \right).
\end{multline}
Let $L(h)$ and $R(h)$ denote the left- and right-hand sides of \eqref{eq:fourier-transforms}, respectively. Then we have
\begin{align}
	L(h) 
	&= \sum_{b^2\equiv mn(24c)} \chi_{12}(b) \chi_m \left([6c,b,\tfrac{b^2-mn}{24c}]\right) \times \frac{1}{12c} \sum_{v(12c)} e\pfrac{(b-h)v}{12c} \notag\\
	&= 	\begin{cases}
		2\,\chi_{12}(h)\,\chi_m\left([6c,h,\tfrac{h^2-mn}{24c}]\right) &\text{ if }h^2\equiv mn \pmod{24c},\\
		0 & \text{ otherwise}.
		\end{cases} \label{eq:L-h-cases}
\end{align}

For the right-hand side, we have
\begin{align*}
	R(h)
	&= \frac{1}{\sqrt3\,c} \sum_{u|c} \pfrac mu \sqrt{\frac uc} \sum_{v(12c/u)} \pfrac{12}{v} e\pfrac{-hv}{12 c/u} \sum_{d(c/u)^*} e^{\pi i\,s(d,c/u)} e\pfrac{\bar d(v^2 m)'+dn'}{c/u} \\
	&= \frac{1}{\sqrt3\,c} \sum_{u|c} \pfrac m{c/u} \frac{1}{\sqrt u} \sum_{d(u)^*} e\pfrac{dn'}{u} \sum_{v(12u)} e^{\pi i\,s(d,u)} e\pfrac{-\bar d}{24u} \pfrac{12}{v} e\pfrac{\bar dmv^2-2hv}{24u}.
\end{align*}
Using Lemma \ref{lem:s-d-c-H}, \eqref{eq:def-H-d-c}, and the fact that $u+v$ is even, we find that
\begin{multline*}
	e^{\pi i\,s(d,u)} e\pfrac{-\bar d}{24u}\pfrac{12}{v}  \\
	= \frac{1}{2\sqrt{3u}} \, e\pfrac{-\bar dv^2}{24u} \sum_{j(2u)} e\left( \frac{-d(3j^2+j)/2}{u} + \frac{j}{2} \right) \left( e\pfrac{v(6j+1)}{12u} + e\pfrac{-v(6j+1)}{12u} \right).
\end{multline*}
Thus we obtain
\begin{multline} \label{eq:r-h-2-gauss-sums}
	R(h) = \frac{1}{6c} \sum_{u|c} \pfrac{m}{c/u} u^{-1} \sum_{d(u)^*} e\pfrac{dn'}{u} \sum_{j(2u)} e\left( -\frac{d(3j^2+j)/2}{u} + \frac j2 \right) \\
	\times \left( G(\bar d(m-1)/2,6j+1-h,12u) + G(\bar d(m-1)/2,-6j-1-h,12u) \right),
\end{multline}
where $G(a,b,c)$ is the quadratic Gauss sum defined in \eqref{eq:def-gauss}. Since $12\mid(m-1)/2$, we see by \eqref{eq:gauss-gcd} that
\begin{multline*}
	G(d(m-1)/2,\pm(6j+1)-h,12u) \\*
	=\begin{cases}
		12 \, G\left(\bar dm', \frac{\pm(6j+1)-h}{12},u\right) & \text{ if } h\equiv \pm (6j+1) \pmod{12}, \\
		0 & \text{ otherwise. }
	\end{cases}
\end{multline*}
In particular, $R(h)=0$ unless $h\equiv \pm 1\pmod 6$.

For the remainder of the proof, we assume that $h\equiv 1\pmod{6}$ (the other case is analogous). Then in \eqref{eq:r-h-2-gauss-sums} the second Gauss sum is zero, and we take only those $j$ for which $j\equiv \frac{h-1}{6} \pmod{2}$. We write $j=2k+\frac{h-1}{6}$; then
\begin{multline*}
	\sum_{j (2u)} e\left( -\frac{d(3j^2+j)/2}{u} + \frac j2 \right) G(\bar d(m-1)/2,6j+1-h,12u) \\
	= 12 e\left(\frac{h-1}{12} - \frac{d(h^2-1)/24}{u}\right) \sum_{k(u)} e\left(-\frac{d k(6k+h)}{u}\right) G(\bar dm',k,u).
\end{multline*}
Since $e(\frac{h-1}{12})=\chi_{12}(h)$ we have
\begin{equation} \label{eq:R-h-F-h}
	R(h) = \frac 2c \, \chi_{12}(h) \sum_{u|c} \pfrac{m}{c/u} u^{-1} F_h(u),
\end{equation}
where
\[
	F_h(u) := \sum_{d(u)^*} e\left(\frac{d}{u}\left(n'-\frac{h^2-1}{24}\right)\right) \sum_{k(u)} e\pfrac{-dk(6k+h)}{u} G(\bar dm',k,u).
\]

We show that $F_h(u)$ is multiplicative as a function of $u$. 
To prove this, suppose that $(q,r)=1$, and choose $\bar r$ and $\bar q$ such that $r\bar r+q\bar q=1$. 
Let $\alpha:=n'-(h^2-1)/24$, and write $d=r\bar rx+q\bar qy$ and $k=k_1r+k_2q$. Using \eqref{eq:def-gauss} and \eqref{eq:gauss-mult} we find that that
\begin{multline*}
	F_h(qr) = \sum_{x(q)^*} e\pfrac{\bar r x\alpha}{q} \sum_{k_1(q)} e\pfrac{-k_1x(6k_1r+h)}{q}G(r\bar xm',k_1r,q) \\
	\times \sum_{y(r)^*} e\pfrac{\bar q y\alpha}{r} \sum_{k_2(r)} e\pfrac{-k_2y(6k_2q+h)}{r}G(q\bar ym',k_2q,r).
\end{multline*}
Replacing $k_1$, $k_2$, $x$, and $y$ by $k_1\bar r$, $k_2\bar q$, $xr$, and $yq$, respectively we conclude that
\[
	F_h(qr) = F_h(q)F_h(r).
\]

Clearly $\frac 1c \sum_{u|c} \pfrac{m}{c/u} u^{-1} F_h(u)$ is multiplicative as a function of $c$. 
Thus, by \eqref{eq:chi-explicit}, \eqref{eq:L-h-cases}, and \eqref{eq:R-h-F-h}, to show that $L(h)=R(h)$ it suffices to show that for each prime power $p^\lambda \mid\mid c$ we have
\begin{equation} \label{eq:F-h-c-prime-power}
	p^{-\lambda} \sum_{j=0}^\lambda \pfrac{m}{p^{\lambda-j}} p^{-j} F_h(p^j) = 
	\begin{dcases}
		\pfrac{m}{p^\lambda} &\text{ if }p\nmid m, \\
		\pfrac{m/p^*}{p^\lambda}\pfrac{p^*}{(h^2-mn)/p^\lambda} &\text{ if }p\mid m,
	\end{dcases}
\end{equation}
when $h^2\equiv mn\pmod{24p^\lambda}$, and $0$ otherwise.
 
Suppose first that $p$ is an odd prime. Set 
\[
	p^\mu:=(m',p^j).
\] 
Then $G(\bar dm',k,p^j)=0$ unless $p^\mu \mid k$. In the latter case, using \eqref{eq:gauss-gcd} and \eqref{eq:gauss-4c-eval}, we find that
\[
	G(\bar dm',k,p^j) = \ep_{p^{j-\mu}} p^{\frac{j+\mu}{2}} \pfrac{\bar d m'/p^\mu}{p^{j-\mu}} e\pfrac{-d\overline{(4m'/p^\mu)}(k/p^\mu)^2}{p^{j-\mu}}.
\]
Writing $k=p^\mu \ell$, we find that
\begin{multline} \label{eq:F-h-p-j-two-sums}
	F_h(p^j) = \ep_{p^{j-\mu}} \pfrac{m'/p^\mu}{p^{j-\mu}} p^{\frac{j+\mu}{2}} \sum_{d(p^j)^*} \pfrac{d}{p^{j-\mu}} e\left(\frac{d}{p^j}\left(n'-\frac{h^2-1}{24}\right)\right) \\ 
	\times \sum_{\ell (p^{j-\mu})} e\pfrac{-d(6p^\mu+\overline{(4m'/p^\mu)})\ell^2-dh\ell}{p^{j-\mu}}.
\end{multline}
Since 
\[
	\overline{(4m'/p^\mu)}m = \overline{(4m'/p^\mu)}(24p^\mu m'/p^\mu+1) \equiv 6p^\mu + \overline{(4m'/p^\mu)} \pmod{p^{j-\mu}},
\] 
the inner sum is equal to
\[
	G(-dm\overline{(4m'/p^\mu)},-dh,p^{j-\mu}).
\]

We first consider the case where $p\nmid m$, and we choose $\bar m$ such that $\bar m m\equiv 1\pmod{24p^\lambda}$.
Using \eqref{eq:gauss-4c-eval} again we find that
\[
	F_h(p^j) = p^j \pfrac{-m}{p^{j-\mu}}\ep_{p^{j-\mu}}^2 \sum_{d(p^j)^*} e\left(\frac{d}{p^j}\left(n'-\frac{h^2-1}{24}+\bar mm'h^2\right)\right).
\]
Note that
\[
	n'-\frac{h^2-1}{24} + \bar mm'h^2 
	\equiv \frac{n-\bar mh^2}{24} \pmod{p^\lambda}.	
\]
Since $\ep_{p^{j-\mu}}^{2}=\pfrac{-1}{p^{j-\mu}}$ and $m\equiv 1\pmod{p^\mu}$
we conclude that the quantity in \eqref{eq:F-h-c-prime-power} is
\begin{align*}
	p^{-\lambda} \sum_{j=0}^\lambda \pfrac{m}{p^{\lambda-j}} p^{-j} F_h(p^j) &= p^{-\lambda} \pfrac{m}{p^\lambda} \sum_{j=0}^\lambda \sum_{d(p^j)^*} e\left(\frac{dp^{\lambda-j}}{p^\lambda}\left(\frac{n-\bar m h^2}{24}\right)\right) \\
	&= p^{-\lambda} \pfrac{m}{p^\lambda} \sum_{d(p^\lambda)} e\left(\frac{d}{p^\lambda}\left(\frac{n-\bar m h^2}{24}\right)\right) \\
	&= 
	\begin{dcases}
		\pfrac{m}{p^\lambda} &\text{ if } p^\lambda \mid \frac{n-\bar m h^2}{24},\\
		0 & \text{ otherwise.}
	\end{dcases}
\end{align*}
The condition $p^\lambda \mid (n-\bar mh^2)/24$ is equivalent to $h^2\equiv mn\pmod{24p^\lambda}$, so \eqref{eq:F-h-c-prime-power} is true in the case where $p$ is an odd prime not dividing $m$.

We turn now to the case where $p\mid m$. Then $p\nmid m'$, so $\mu=0$ in \eqref{eq:F-h-p-j-two-sums}, and since $m$ is squarefree, $(m/p,p)=1$. Furthermore, all of the terms in the sum on the left-hand side of \eqref{eq:F-h-c-prime-power} vanish except for the term $j=\lambda$. From \eqref{eq:F-h-p-j-two-sums} we have
\[
	F_h(p^\lambda) = \ep_{p^\lambda} \pfrac{m'}{p^\lambda} p^{\lambda/2} \sum_{d(p^\lambda)^*} \pfrac{d}{p^\lambda} e\left(\frac{d}{p^\lambda}\left(n'-\frac{h^2-1}{24}\right)\right) G(-dm\bar{(4m')},-dh,p^\lambda),
\]
which is zero unless $p \mid h$. Assume that $p \mid h$; then, using \eqref{eq:gauss-4c-eval} we obtain
\begin{equation*}
	F_h(p^\lambda) = p^{\lambda+\frac12} \ep_p \pfrac{m'}{p} \pfrac{-m/p}{p^{\lambda-1}} \sum_{d(p^\lambda)^*} \pfrac{d}{p} e\left(\frac{d}{p^{\lambda}}\left(n'-\frac{h^2-1}{24}+\frac{\bar{(m/p)}m'h^2}{p} \right)\right).
\end{equation*}
Set 
\[
	\alpha:= n'-\frac{h^2-1}{24}+\frac{\bar{(m/p)}m'h^2}p.
\]
Replacing $d$ by $d+p$, we see that the sum
\[
	\sum_{d(p^\lambda)^*} \pfrac{d}{p} e\pfrac{d\alpha}{p^\lambda}
\]
is zero unless $p^{\lambda-1} \mid \alpha$. Assume that $p^{\lambda-1}\mid\alpha$. Then
\begin{align*}
	\sum_{d(p^\lambda)^*} \pfrac{d}{p} e\pfrac{d\alpha}{p^\lambda} 
	= p^{\lambda-1} \sum_{d(p)} \pfrac{d}{p} e\pfrac{d\alpha/p^{\lambda-1}}{p}
	= \ep_p \, p^{\lambda-\frac12} \pfrac{\alpha/p^{\lambda-1}}{p},
\end{align*}
where the last equality uses Theorem 1.1.5 of \cite{BEW:Gauss} and \eqref{eq:gauss-eval}.
We have $p\neq 3$ since $m\equiv 1 \pmod{24}$, so $\pfrac{m'}{p}=\pfrac{-24}{p}$.
Therefore
\[
	F_h(p^\lambda) = p^{2\lambda} \ep_p^2 \pfrac{-m/p}{p^{\lambda-1}} \pfrac{-24\alpha/p^{\lambda-1}}{p}.
\]
We have
\begin{align*}
	\frac{24\alpha}{p^{\lambda-1}} = \frac{p(n-h^2)+\bar{(m/p)}24m'h^2}{p^{\lambda}}
\end{align*}
which, together with the fact that $\overline{(m/p)}m\equiv p\pmod{p^{\lambda+1}}$, yields
\[
	\frac{24\alpha}{p^{\lambda-1}} \equiv \frac{\overline{(m/p)}\left[mn-mh^2+24m'h^2\right]}{p^\lambda} \equiv \frac{\overline{(m/p)}\left[mn-h^2\right]}{p^\lambda} \pmod{p}.
\]
Therefore
\begin{align*}
	F_h(p^\lambda) = p^{2\lambda} \pfrac{-m/p}{p^{\lambda}} \pfrac{(h^2-mn)/p^\lambda}{p}
	= p^{2\lambda} \pfrac{m/p^*}{p^{\lambda}} \pfrac{p^*}{(h^2-mn)/p^\lambda},
\end{align*}
under the assumption that $p^{\lambda-1}\mid \alpha$. This assumption is equivalent to $h^2\equiv mn\pmod{24p^\lambda}$ and implies that $p\mid h$, which justifies our previous assumption. Thus we conclude that
\begin{equation}
	F_h(p^\lambda) = 
	\begin{dcases}
		p^{2\lambda} \pfrac{m/p^*}{p^\lambda}\pfrac{p^*}{(h^2-mn)/p^\lambda} &\text{ if }h^2\equiv mn\pmod{24p^\lambda},\\
		0 & \text{ otherwise},
	\end{dcases}
\end{equation}
which verifies \eqref{eq:F-h-c-prime-power} in the case where $p$ is an odd prime dividing $m$.

Now suppose that $p=2$.
Since $2\nmid m$ and $\pfrac m2=1$, we want to show that
\begin{equation} \label{eq:p=2-want}
	\sum_{j=0}^\lambda 2^{-j} F_h(2^j) = 
	\begin{cases}
		2^\lambda &\text{ if } h^2\equiv mn\pmod{24\cdot2^\lambda},\\
		0 & \text{ otherwise}.
	\end{cases}
\end{equation}
We recall the definition
\[
	F_h(u) := \sum_{d(u)^*}e\left(\frac du\left(n'-\frac{h^2-1}{24}\right)\right)\sum_{k(u)} e\left(-\frac{dk(6k+h)}{u}\right) G(\bar dm', k, u).
\]
Define $\mu$ by
\[
	2^\mu=(m',2^j).
\]
Then
\[
	G(\bar dm',k,2^j) = 
	\begin{cases}
		2^\mu G(\bar dm'/2^\mu,k/2^\mu,2^{j-\mu}) &\text{ if }2^\mu \mid k,\\
		0 & \text{ otherwise}.
	\end{cases}
\]
Let 
\[
	\beta:= n'-\frac{h^2-1}{24}+h^2\,\bar m\,m', \text{ with } \bar m\,m\equiv 1\pmod{24\cdot2^\lambda}.
\]
We claim that
\begin{equation} \label{eq:F-h-2-claim}
	F_h(2^j) = 2^j \sum_{d(2^j)^*} e\pfrac{d\beta}{2^j}.
\end{equation}
If $\mu=j$ then $G(\bar dm'/2^\mu,k/2^\mu,2^{j-\mu})=1$ and $2^j\mid m'$, so
\[
	F_h(2^j) = 2^j \sum_{d(2^j)^*} e\pfrac{d\beta}{2^j}.
\]
If $\mu=j-1$ then
\[
	G(\bar dm'/2^\mu,k/2^\mu,2^{j-\mu}) = 
	\begin{cases}
		2 &\text{ if } k/2^\mu \text{ is odd},\\
		0 &\text{ if } k/2^\mu \text{ is even}.
	\end{cases}
\]
Since $2^{j-1}\mid m'$ and $\bar m$ is odd, we have $\beta\equiv n'-(h^2-1)/24-2^{j-1}h \pmod{2^j}$, which yields
\begin{align*}
	F_h(2^j) &= 2^j \sum_{d(2^j)^*} e\left(\frac{d}{2^j}\left(n'-\frac{h^2-1}{24}\right) - \frac{d\cdot 2^{j-1}(6\cdot 2^{j-1}+h)}{2^j}\right) \\
	&=2^j \sum_{d(2^j)^*} e\pfrac{d\beta}{2^j}.
\end{align*}
If $\mu \leq j-2$ then by \eqref{eq:gauss-4c-eval} we have
\begin{multline*}
	G(\bar dm'/2^\mu,k/2^\mu,2^{j-\mu}) = \\ 
	\begin{dcases}
		(1+i)\,\ep_{\bar dm'/2^\mu}^{-1} \, 2^{\frac{j-\mu}2} \pfrac{2^{j-\mu}}{\bar dm'/2^\mu} e\left( -\frac{d(\bar{m'/2^\mu})(k/2^\mu)^2}{2^{j-\mu+2}} \right) &\text{ if $k/2^\mu$ is even},\\
		0 &\text{ if $k/2^\mu$ is odd}.
	\end{dcases}
\end{multline*}
Writing $k=2^{\mu+1}\ell$, we have
\begin{multline*}
	F_h(2^j) = (1+i)\,2^{\frac{j+\mu}{2}} \sum_{d(2^j)^*} e\left(\frac{d}{2^j}\left(n'-\frac{h^2-1}{24}\right)\right) \ep_{\bar dm'/2^\mu}^{-1} \pfrac{2^{j-\mu}}{\bar dm'/2^\mu} \\*
	\times \sum_{\ell(2^{j-\mu-1})}e\left(-\frac{d\cdot 2^{\mu+1}\ell(6\cdot 2^{\mu+1}\ell+h)}{2^j} - \frac{d(\bar{m'/2^\mu})\ell^2}{2^{j-\mu}}\right).
\end{multline*}
Since $(\bar{m'/2^\mu})+24\cdot 2^\mu\equiv(\bar{m'/2^\mu})m\pmod{2^{j-\mu}}$, the inner sum equals
\begin{multline*}
	\frac 12 G(-dm(\bar{m'/2^\mu}),-2dh,2^{j-\mu}) \\
	= (1+i) \, 2^{\frac{j-\mu}{2}-1} \, \ep_{-dm(\bar{m'/2^\mu})}^{-1} \pfrac{2^{j-\mu}}{-dm(\bar{m'/2^\mu})} e\pfrac{dh^2\,\bar m\,m'}{2^j}.
\end{multline*}
Since
\[
	\ep_{\bar dm'/2^\mu}^{-1} \ep_{-dm\overline{(m'/2^\mu)}}^{-1} = -i,
\]
we have
\[
	F_h(2^j) = 2^j \sum_{d(2^j)^*} e\pfrac{d\beta}{2^j}.
\]

We conclude in every case that
\begin{align*}
	\sum_{j=0}^{\lambda} 2^{-j} F_h(2^j) &= \sum_{j=0}^\lambda \sum_{d\,(2^j)^*} e\pfrac{d\beta}{2^j} =
	\sum_{d(2^\lambda)}e\pfrac{d\beta}{2^\lambda}=
	\begin{cases}
		2^\lambda &\text{ if }h^2\equiv mn\pmod{24\cdot 2^\lambda},\\
		0 & \text{ otherwise},
	\end{cases}
\end{align*}
which completes the proof.
\end{proof}

\section{Proof of Theorem \ref{thm:main}} \label{sec:proof-of-theorem}

We begin by recording exact formulas for the coefficients $p(m,n)$ in terms of Kloosterman sums and the $I$- and $J$-Bessel functions $I_\alpha(x)$ and $J_\alpha(x)$. 
The following formulas are found in Proposition 11 of \cite{Ahlgren:2013aa}.
Let $\tilde{h}_m$ denote the functions in that paper; then our functions $h_m$ described in \eqref{eq:h-m-pos} and \eqref{eq:h-m-neg} are normalized as
\begin{equation} \label{eq:h-m-normalize}
	h_m = 
	\begin{cases}
		-\frac{3m^{3/2}}{4\sqrt{\pi}} \, \tilde h_m & \text{ if }m>0, \\
		\hphantom{-} |m|^{3/2} \, \tilde h_m & \text{ if } m<0.
 	\end{cases}
\end{equation}
Suppose that $m,n\equiv 1\pmod{24}$ are not both negative. By Propositions 8 and 11 of \cite{Ahlgren:2013aa} we have
\begin{equation} \label{eq:p-m-n-exact}
	p(m,n) =
	\begin{dcases}
		2\pi |mn|^{-1/4} \sum_{c>0} \frac{K(m',n';c)}{c} I_{3/2}\pfrac{\pi\sqrt{|mn|}}{6c} & \text{ if }mn<0, \\
		4(mn)^{-1/4} \sum_{c>0} \frac{K(m',n';c)}{c} \left[\frac{\partial}{\partial s} J_{s-1/2}\pfrac{\pi\sqrt{mn}}{6c} \bigg|_{s=2}\right] & \text{ if }mn>0.
	\end{dcases}
\end{equation}

To prove Theorem \ref{thm:main} we will show that the traces $\Tr_v(m,n)$ can also be expressed as infinite series involving Kloosterman sums and Bessel functions. 
This is essentially accomplished in the following proposition.
To simplify the statement of the proposition for square and non-square positive discriminants, we set $P_{v,Q}(\tau,s):=P_v(\tau,s)$ whenever $Q$ has positive non-square discriminant. The functions $P_v(\tau)$, $P_v(\tau,s)$, and $P_{v,Q}(\tau,s)$ are defined in \eqref{eq:P-v-tau-2=P-v-tau}, \eqref{eq:def-p-v-tau-s}, and \eqref{eq:def-p-v-q}, respectively.

\begin{proposition} \label{prop:traces-no-deriv}
Suppose that $m,n\equiv 1\pmod{24}$ and that $m$ is squarefree.
If $mn<0$ then
\begin{multline} \label{eq:mn-neg}
	|mn|^{-1/2} \sum_{Q\in \Gamma\backslash \sQ^{(1)}_{mn}} \chi_m(Q) P_v(\tau_Q)
	\\=
	2\pi |mn|^{-1/4} \sum_{d\mid v} \sqrt d \pfrac{12}{d} \pfrac{m}{v/d} \sum_{c>0} \frac{K((d^2m)',n';c)}c I_{3/2}\pfrac{\pi \sqrt{|d^2mn|}}{6c}.
\end{multline}
If $\re(s)>1$ and $m,n>0$ then
\begin{multline} \label{mn-pos-s}
	\frac{1}{2\pi} \sum_{Q\in \Gamma\backslash \sQ^{(1)}_{mn}} \chi_m(Q) \int_{C_Q} P_{v,Q}(\tau,s) \frac{d\tau}{Q(\tau,1)}
	\\*= 
	4 (mn)^{-1/4} \sum_{d \mid v} \sqrt d \pfrac{12}{d} \pfrac{m}{v/d} \sum_{c>0} \frac{K((d^2m)',n';c)}{c} J_{s-1/2} \pfrac{\pi \sqrt{d^2mn}}{6c}.
\end{multline}
\end{proposition}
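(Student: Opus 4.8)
The plan is to compute each trace by unfolding the Poincar\'e series definition \eqref{eq:def-p-v-tau-s} of $P_v(\tau,s)$ against the sum over quadratic forms, to recognize the resulting arithmetic sums as the Weyl sums $S_v(m,n;24c)$, and then to invoke Proposition \ref{prop:s-k}. First I would absorb the Atkin--Lehner average into the form sum. Since $W_d\tau_Q=\tau_{W_dQ}$ by \eqref{eq:root-compatible} and $\chi_m(W_dQ)=\chi_m(Q)$ by property P3 of Lemma \ref{lem:chi-props}, replacing $Q$ by $W_dQ$ turns the $d\mid 6$ sum weighted by $\mu(d)$ into a signed sum of traces over the classes $\Gamma\backslash\sQ^{(r)}_{mn}$; tracking the signs against \eqref{eq:r'-r-cases} shows that the combination $\mu(1),\mu(2),\mu(3),\mu(6)$ on $r=1,7,5,11$ is exactly $\chi_{12}(r)$, so by \eqref{eq:Qn-decomp-W-d} the whole thing becomes a single sum of $\chi_{12}(b_Q)\chi_m(Q)\,\phi_v(\tau_{\gamma Q},s,0)$ over $Q\in\Gamma\backslash\sQ_{mn}$ and $\gamma\in\Gamma_\infty\backslash\Gamma$. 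As $b\bmod 12$ and $\chi_m$ are $\Gamma$-invariant, this double sum collapses by the usual unfolding into a single sum over $\Gamma_\infty$-orbits $\Gamma_\infty\backslash\sQ_{mn}$, evaluated against $\phi_v$ itself. In the imaginary case ($mn<0$) the stabilizers $\Gamma_Q$ are trivial so this is immediate; in the real case the cycle $C_Q=\Gamma_Q\backslash S_Q$ together with \eqref{eq:d-tau-prime} permits the same unfolding, with $\int_{C_Q}$ opening up to an integral over the full geodesic $S_Q$.

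Next I would parametrize $\Gamma_\infty\backslash\sQ_{mn}$ by writing each form as $[6c,b,\tfrac{b^2-mn}{24c}]$ with $c>0$ and $b$ running modulo $12c$ subject to $b^2\equiv mn\pmod{24c}$, since $\Gamma_\infty$ acts by $b\mapsto b+12c$. The real part of each root is $-b/(12c)$, so the factor $e(-vx)$ in $\phi_v$ contributes $e(bv/(12c))$; combined with $\chi_{12}(b)$ and $\chi_m$, and noting that each orbit modulo $12c$ accounts for two of the residues modulo $24c$ appearing in \eqref{eq:def-S-v}, the $b$-sum assembles into $\tfrac12 S_v(m,n;24c)$. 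What remains is a local factor depending only on $c$: the value $\M_{s,0}(4\pi v y)$ at the height $y=\sqrt{|mn|}/(12c)$ of the CM point in the imaginary case, and the geodesic integral $\int_{S_Q}\phi_v(\tau,s,0)\,d\tau_Q$ in the real case. Using the relations between the $M$-Whittaker function and Bessel functions (Chapter 13 of \cite{AS:Pocketbook}), namely $M_{0,\mu}(z)\propto\sqrt z\,I_\mu(z/2)$, the former (at $s=2$, $\mu=\tfrac32$) evaluates to a multiple of $c^{-1}I_{3/2}\big(\pi\sqrt{|mn|}v/(6c)\big)$; the oscillatory geodesic integral in the real case yields instead a multiple of $c^{-1}J_{s-1/2}\big(\pi\sqrt{mn}\,v/(6c)\big)$, which is the Katok--Sarnak-type dichotomy between the elliptic and hyperbolic evaluations.

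The final step is to substitute Proposition \ref{prop:s-k} for $S_v(m,n;24c)$ and reindex. Writing $u\mid(v,c)$, $d:=v/u$, and $c=uc'$, the divisor sum over $u\mid v$ becomes a sum over $d\mid v$, the Kloosterman modulus $c/u$ becomes $c'$, and the argument $(\tfrac{v^2}{u^2}m)'$ becomes $(d^2m)'$; simultaneously $\sqrt{u/c}=1/\sqrt{c'}$ and the Bessel argument $\pi\sqrt{|mn|}\,v/(6c)$ transforms to $\pi\sqrt{|d^2mn|}/(6c')$ because the factor $v$ is replaced by $v/(v/d)=d$. Collecting the constants $C(s)$, $1/\Gamma(2s)$, the factor $\tfrac12$ from the $b$-sum, the $\tfrac16$ from the Atkin--Lehner average, the $4\sqrt3$ from Proposition \ref{prop:s-k}, and the Whittaker/Bessel normalization constants, this reproduces exactly the right-hand sides of \eqref{eq:mn-neg} and \eqref{mn-pos-s}.

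The main obstacle is the local geodesic integral in the real-quadratic case, and especially the square-discriminant subcase. When the discriminant of $Q$ is a square the geodesic $S_Q$ meets the boundary of $\H$ at cusps, and the integral of the full series $P_v(\tau,s)$ diverges; this is precisely why one passes to the dampened function $P_{v,Q}(\tau,s)$ of \eqref{eq:def-p-v-q}, which deletes the two $\Gamma_\infty\backslash\Gamma$-cosets whose translates carry a cusp of $Q$ to $\infty$. I would need to verify that after this deletion the unfolded integral converges near both endpoints and still produces the same $J_{s-1/2}$ expression, and to justify the interchange of summation and integration uniformly for $\re(s)>1$. The Whittaker-to-Bessel geodesic evaluation itself is delicate but essentially standard, following the computations of \cite{DIT:CycleIntegrals} in the non-square case and \cite{Andersen:Geodesics} in the square case.
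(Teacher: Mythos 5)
Your proposal is correct and follows essentially the same route as the paper's proof: unfolding the Atkin--Lehner-averaged Poincar\'e series via the bijection $(\gamma,d,Q)\mapsto \gamma W_d Q$ onto $\Gamma_\infty\backslash\sQ_{mn}$ (with $\mu(d)$ becoming $\pfrac{12}{b}$), evaluating the local factor as $I_{3/2}$ at CM points and as $J_{s-1/2}$ via the geodesic integral (Lemma 9 of \cite{DIT:CycleIntegrals}), recognizing the $b$-sum as $\tfrac12 S_v(m,n;24c)$, and finishing with Proposition \ref{prop:s-k} and the reindexing $d=v/u$. The one point you flag as needing verification --- convergence after deleting terms in the square-discriminant case --- is resolved in the paper exactly along the lines you anticipate: the deleted cosets correspond precisely to the forms $[0,\pm\sqrt{mn},*]$, so the unfolded sum is simply the same expression restricted to $a\neq 0$.
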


Before proving Proposition \ref{prop:traces-no-deriv}, we remark that when $s=2$, the right-hand side of \eqref{mn-pos-s} is often identically zero. This follows from equation (3.15) of \cite{Ahlgren:2013aa}, which states that
\[
	\sum_{c>0} \frac{K(m',n';c)}{c} J_{3/2}\pfrac{\pi \sqrt{mn}}{6c} = 
	\begin{cases}
		0 & \text{ if } m\neq n, \\
		\frac 1{2\pi} & \text{ if } m=n.
	\end{cases}
\]
The only situation in which the right-hand side of \eqref{mn-pos-s} does not vanish is when $n=mt^2$ for some integer $t$ and $v=t\ell$, for some integer $\ell$ with $(\ell,m)=1$. In that case \eqref{mn-pos-s} becomes
\[
	\sum_{Q\in \sQ_{(mt)^2}^{(1)}} \chi_m(Q) \int_{C_Q} P_{v,Q}(\tau,2) \frac{d\tau}{Q(\tau,1)} = \frac{4}{\sqrt m} \pfrac{12}{t}\pfrac{m}{\ell}.
\]

\begin{proof}[Proof of Proposition \ref{prop:traces-no-deriv}]
Suppose that $mn<0$, and let $L_v^-(m,n)$ denote the left-hand side of \eqref{eq:mn-neg}. Using the definition of $P_v(\tau)=P_v(\tau,2)$ in \eqref{eq:def-p-v-tau-s}, we find that
\[
	L_v^-(m,n) = \tfrac1{6} |mn|^{-1/2} \sum_{d \mid 6}\sum_{Q\in \Gamma\backslash\sQ_{mn}^{(1)}} \sum_{\gamma\in \Gamma_\infty \backslash \Gamma} \mu(d) \, \chi_m(Q) \, \phi_v\left(\gamma W_d\,\tau_Q,2,0\right).
\]
Using P1 and P3 of Lemma \ref{lem:chi-props} and equation \eqref{eq:root-compatible}, this becomes
\[
	L_v^-(m,n) = \tfrac1{6} |mn|^{-1/2} \sum_{d \mid 6}\sum_{Q\in \Gamma\backslash\sQ_{mn}^{(1)}} \sum_{\gamma\in \Gamma_\infty \backslash \Gamma} \mu(d) \, \chi_m(\gamma W_d\, Q) \, \phi_v\left(\tau_{\gamma W_d\, Q},2,0\right).
\]
By \eqref{eq:W-d-r-r'} and \eqref{eq:Qn-decomp-W-d} the map $\left(\gamma,d,Q\right) \mapsto \gamma W_d\,Q$ gives a bijection 
\begin{equation} \label{eq:bij-neg-disc}
	\Gamma_\infty\backslash\Gamma \times \{1,2,3,6\} \times \Gamma\backslash \sQ^{(1)}_{mn} \longleftrightarrow \Gamma_\infty \backslash \sQ_{mn}.
\end{equation}
If $Q\in \sQ_{mn}^{(1)}$ and $Q'=W_dQ=[a,b,c]$ then $\mu(d)=\pfrac{12}b$ by \eqref{eq:W-d-r-r'} and \eqref{eq:r'-r-cases}.
With $\M_{s,k}(y)$ as in \eqref{def-M-s-k}, we have
\[
	L_v^-(m,n) = \tfrac1{6} |mn|^{-1/2} \sum_{\substack{Q\in \Gamma_\infty \backslash \sQ_{mn}\\Q=[a,b,c]}} \ptfrac{12}b \chi_m(Q) \M_{2,0}(4\pi v\im\tau_Q) e(-v\re\tau_Q).
\]
Since $\sQ_{mn}$ contains only positive definite forms (those with $a>0$) we have
\[
	\tau_Q = -\frac{b}{2a} + i \frac{\sqrt{|mn|}}{2a}.
\]
By (13.1.32) and (13.6.6) of \cite{AS:Pocketbook}, we have
\[
	\M_{2,0}(4\pi v y) = M_{0,3/2}(4\pi v y) = 12\pi\sqrt{v y} \, I_{3/2}(2\pi v y),
\]
which gives
\begin{equation} \label{eq:L-v-minus-Gamma-infty}
	L_v^-(m,n) = \pi \sqrt{2v} \, |mn|^{-1/4} \sum_{\substack{Q\in \Gamma_\infty \backslash \sQ_{mn}\\Q=[a,b,c]}} \pfrac{12}b \frac{\chi_m(Q)}{\sqrt a} I_{3/2}\pfrac{\pi v\sqrt{|mn|}}{a} e\pfrac{bv}{2a}.
\end{equation}

Now suppose that $m$ and $n$ are both positive, and let $L_v^+(m,n)$ denote the left-hand side of \eqref{mn-pos-s}.
As in \eqref{eq:d-tau-q-def}, we write
\[
	d\tau_Q = \frac{\sqrt{mn}\,d\tau}{Q(\tau,1)}.
\]
If $mn$ is not a square, then by \eqref{eq:def-p-v-tau-s} we have
\begin{equation} \label{eq:L-v-plus-C-Q}
	L_v^+(m,n) = \frac{C(s)}{2\pi \Gamma(2s) \sqrt{mn}} \sum_{d\mid 6} \mu(d) 
	\sum_{Q\in \Gamma \backslash \sQ^{(1)}_{mn}} \chi_m(Q) \sum_{\gamma\in \Gamma_\infty\backslash\Gamma} \int_{C_Q} \phi_v(\gamma W_d\, \tau,s,0) \, d\tau_Q.
\end{equation}
For each $Q$, let $\Gamma_Q \subseteq \Gamma$ denote the stabilizer of $Q$. We rewrite the sum over $\Gamma_\infty \backslash \Gamma$ as a sum over $\gamma\in \Gamma_\infty\backslash\Gamma/\Gamma_Q$ and a sum over $g\in \Gamma_Q$. Since $S_Q=\cup_{g\in \Gamma_Q} C_Q$, the inner sum in \eqref{eq:L-v-plus-C-Q} becomes
\[
	\sum_{\gamma\in \Gamma_\infty\backslash\Gamma/\Gamma_Q} \int_{S_Q} \phi_v(\gamma W_d\, \tau,s,0) \, d\tau_Q.
\]
In each integral, we replace $\tau$ by $W_d^{-1}\gamma^{-1}\tau$. Using \eqref{eq:d-tau-prime} and P1 and P3 of Lemma \ref{lem:chi-props}, we obtain
\begin{equation} \label{eq:L-v-plus-gamma-W-d}
	L_v^+(m,n) = \frac{C(s)}{2\pi\Gamma(2s) \sqrt{mn}} \sum_{d\mid 6} 
	\sum_{Q\in \Gamma \backslash \sQ^{(1)}_{mn}} \sum_{\gamma\in \Gamma_\infty\backslash\Gamma/\Gamma_Q}\mu(d) \chi_m(\gamma W_d\, Q) \int_{S_{\gamma W_d\,Q}} \phi_v(\tau,s,0) \, d\tau_{\gamma W_d\,Q}.
\end{equation}
As in \eqref{eq:bij-neg-disc}, we have the bijection
\[
	\Gamma_\infty\backslash \Gamma /\Gamma_Q \times \{1,2,3,6\} \times \Gamma\backslash \sQ^{(1)}_{mn} \longleftrightarrow \Gamma_\infty \backslash \sQ_{mn}
\]
given by $(\gamma,d,Q)\mapsto \gamma W_d\,Q$. Thus
\begin{equation} \label{eq:L-v-plus-Gamma-infty}
	L_v^+(m,n) = \frac{C(s)}{2\pi\Gamma(2s) \sqrt{mn}} \sum_{\substack{Q\in \Gamma_\infty \backslash \sQ_{mn} \\ Q=[a,b,c]}} \pfrac{12}{b} \chi_m(Q) \int_{S_Q} \M_{s,0}(4\pi v\im \tau) e(-v\re \tau) \, d\tau_Q.
\end{equation}
In order to treat the square case together with the non-square case, we will show that, with the added condition $a\neq 0$ in the sum, \eqref{eq:L-v-plus-Gamma-infty} holds when $mn$ is a square. 
Of course, $a\neq 0$ is implied in \eqref{eq:L-v-plus-Gamma-infty} when $mn$ is not a square.

Suppose that $mn>0$ is a square.
For each $Q\in \sQ_{mn}^{(1)}$ define $\mathfrak a_{i,Q}:=r_{i,Q}/s_{i,Q}$ as in \eqref{eq:def-p-v-q}.
The stabilizer $\Gamma_Q$ is trivial for all $Q\in \sQ_{mn}$ and, using \eqref{eq:def-p-v-q}, we have \eqref{eq:L-v-plus-gamma-W-d} with the added condition $\gamma W_d\, \mathfrak a_{i,Q}\neq \infty$ on the third sum; that is,
\begin{equation} \label{eq:L-v-plus-gamma-W-d-square}
	L_v^+(m,n) = \frac{C(s)}{2\pi\Gamma(2s) \sqrt{mn}} \sum_{d\mid 6} 
	\sum_{Q\in \Gamma \backslash \sQ^{(1)}_{mn}} \sum_{\substack{\gamma\in \Gamma_\infty\backslash\Gamma \\ \gamma W_d \, \mathfrak a_{i,Q} \neq \infty}}\mu(d) \chi_m(\gamma W_d\, Q) \int_{S_{\gamma W_d\,Q}} \phi_v(\tau,s,0) \, d\tau_{\gamma W_d\,Q}.
\end{equation}
The quadratic forms $Q$ having $\infty=[1,0]$ as a root are of the form $Q=[0,\pm b,*]$, where $b=\sqrt{mn}$. 
Thus the condition $\gamma W_d \, \mathfrak a_{i,Q} \neq \infty$ is equivalent to $\gamma W_d\, Q\neq [0,\pm b,*]$.
Applying the bijection \eqref{eq:bij-neg-disc}, which holds in this case since $\Gamma_Q$ is trivial, we obtain \eqref{eq:L-v-plus-Gamma-infty} with the restriction $a\neq0$ on the sum.

Treating the square and non-square case together, we assume that $m$ and $n$ are arbitrary positive integers satisfying $m,n\equiv 1\pmod{24}$. Suppose that $Q=[a,b,c]$. The apex of the semicircle $S_Q$ is 
\[
	-\frac{b}{2a} + i \frac{\sqrt{mn}}{2|a|},
\]
so we parametrize $S_Q$ by
\begin{align}
	\tau &= -\frac{b}{2a} + \frac{\sqrt{mn}}{2a} e^{i\sgn(a)\theta} \notag \\
	 &= -\frac{b}{2a} + \frac{\sqrt{mn}}{2a} \cos\theta + i \frac{\sqrt{mn}}{2|a|} \sin\theta, \qquad 0\leq \theta\leq \pi. \label{eq:parametrize-S_Q}
\end{align}
Then we have
\begin{align*}
	Q(\tau,1) &= a\left( \frac{b^2-2b\sqrt{mn}\,e^{i\sgn(a)\theta} + mn \, e^{2i\sgn(a)\theta}}{4a^2} \right) + b \left( \frac{-b+\sqrt{mn} \, e^{i\sgn(a)\theta}}{2a} \right) + c \\
		&= \frac{mn}{4a} (e^{2i\sgn(a)\theta}-1),
\end{align*}
which gives
\begin{equation} \label{eq:d-tau-Q-theta}
	d\tau_Q = \frac{\sqrt{mn}\,d\tau}{Q(\tau,1)} = \frac{d\theta}{\sin\theta}.
\end{equation}
Combining \eqref{eq:L-v-plus-Gamma-infty}, \eqref{eq:parametrize-S_Q}, and \eqref{eq:d-tau-Q-theta}, we obtain
\begin{equation*} \label{eq:l-v-plus-param}
	L_v^+(m,n) = \frac{C(s)}{2\pi\Gamma(2s) \sqrt{mn}} \sum_{\substack{Q\in \Gamma_\infty \backslash \sQ_{mn} \\ Q=[a,b,c],a\neq 0}} R_Q(m,n),
\end{equation*}
where
\[
	R_Q(m,n) := \pfrac{12}{b} \chi_m(Q) e\pfrac{bv}{2a}
	\int_0^\pi \M_{s,0}\left(\frac{2\pi v\sqrt{mn}}{|a|}\sin\theta\right) e\left(-\frac{v\sqrt{mn}}{2a}\cos\theta\right) \frac{d\theta}{\sin\theta}.
\]
For each $Q=[a,b,c]\in \Gamma_\infty \backslash \sQ_{mn}$ with $a>0$, we have (using P5 of Lemma \ref{lem:chi-props})
\begin{multline*}
	R_Q(m,n) + R_{-Q}(m,n) 
	\\= 2\pfrac{12}{b}\chi_m(Q)e\pfrac{bv}{2a} \int_0^\pi \M_{s,0}\left(\frac{2\pi v\sqrt{mn}}{a} \sin\theta \right) \cos\left(\frac{\pi v\sqrt{mn}}{a} \cos\theta \right) \frac{d\theta}{\sin\theta}.
\end{multline*}
By (13.6.6) of \cite{AS:Pocketbook} we have
\[
	\M_{s,0}(y) = M_{0,s-1/2}(y) = 2^{2s-1} \, \Gamma(s+1/2) \, \sqrt{y} \, I_{s-1/2}(y/2),
\]
hence
\begin{multline}
	L_v^+(m,n) = \frac{2^{2s-1/2} C(s) \Gamma(s+1/2)}{\sqrt\pi\,\Gamma(2s)(mn)^{1/4}} \sum_{Q\in \Gamma_\infty \backslash \sQ_{mn}^+} \pfrac{12}{b} \chi_m(Q) \sqrt{\frac va} \, e\pfrac{bv}{2a} 
	\\*
	\times \int_0^\pi I_{s-1/2}\left(\frac{\pi v\sqrt{mn}}{a}\sin\theta\right) \cos\left(\frac{\pi v\sqrt{mn}}{a} \cos\theta\right) \frac{d\theta}{\sqrt{\sin\theta}},
\end{multline}
where $\sQ_{mn}^+$ consists of those $Q=[a,b,c]$ with $a>0$. Lemma 9 of \cite{DIT:CycleIntegrals} asserts that for $\re s>0$ we have
\[
	\int_0^\pi \cos(t\cos\theta) I_{s-1/2}(t\sin\theta) \frac{d\theta}{\sqrt{\sin\theta}} = 2^{s-1}\frac{\Gamma(s/2)^2}{\Gamma(s)} J_{s-1/2}(t).
\]
Since
\[
	\frac{2^{3s-3/2}C(s)\Gamma(s+1/2)\Gamma(s/2)^2}{\sqrt\pi\,\Gamma(2s) \, \Gamma(s)} = 2\sqrt2,
\]
we obtain
\begin{equation} \label{eq:L-v-plus-Bessel-J}
	L_v^+(m,n) = 2\sqrt{2v} \, (mn)^{-1/4} \sum_{Q\in\Gamma_\infty \backslash \sQ_{mn}^+} \pfrac{12}{b} \frac{\chi_m(Q)}{\sqrt a} \, e\pfrac{bv}{2a} J_{s-1/2}\pfrac{\pi v\sqrt{mn}}{a}.
\end{equation}

Summarizing \eqref{eq:L-v-minus-Gamma-infty} and \eqref{eq:L-v-plus-Bessel-J}, we have
\begin{equation} \label{L-v-pm-gamma-infty}
	L_v^\pm(m,n)
	= \sqrt{2v} \, |mn|^{-1/4} \sum_{\substack{Q\in \Gamma_\infty \backslash \sQ_{mn}^+ \\ Q=[a,b,c]}} \pfrac{12}{b} \frac{\chi_m(Q)}{\sqrt a} e\pfrac{bv}{2a} \varphi^\pm\pfrac{\pi v\sqrt{|mn|}}{a},
\end{equation}
where
\begin{gather*}
	\varphi^-(x) = \pi I_{3/2}(x), \\
	\varphi^+(x) = 2 J_{s-1/2}(x).
\end{gather*}
Since $\pmatrix 1k01 [a,b,c]=[a,b-2ka,*]$, we have a bijection
\begin{equation*} \label{eq:sQ-infty-bijection}
	\Gamma_\infty\backslash \sQ_{mn}^+ \longleftrightarrow \left\{(a,b) : a>0, \  6\mid a, \  0\leq b<2a\right\},
\end{equation*}
which gives
\begin{multline} \label{eq:L-v-pm-a}
	L_v^\pm(m,n) = \\ \sqrt{2v} \, |mn|^{-1/4} \sum_{\substack{a>0 \\ 6\mid a}} a^{-1/2} \varphi^\pm\pfrac{\pi v\sqrt{|mn|}}{a}  \sum_{\substack{b\bmod 2a \\ \frac{b^2-mn}{4a}\in \Z}} \pfrac{12}b \chi_m\left([a,b,\tfrac{b^2-mn}{4a}]\right) e\pfrac{bv}{2a}.
\end{multline}
We write $a=6c$ and find that the inner sum in \eqref{eq:L-v-pm-a} equals $\frac 12 S_v(m,n;24c)$ (see \eqref{eq:def-S-v}), so
\[
	L_v^\pm(m,n) = \frac{\sqrt v}{2\sqrt 3} |mn|^{-1/4} \sum_{c>0} \frac{S_v(m,n;24c)}{\sqrt c} \varphi^\pm\pfrac{\pi v\sqrt{|mn|}}{6c}.
\]
Applying Proposition \ref{prop:s-k}, we obtain 
\begin{multline*}
	L_v^\pm(m,n) = 2\sqrt{v} \, |mn|^{-1/4} \sum_{c>0} c^{-1/2} \varphi^\pm\pfrac{\pi v\sqrt{|mn|}}{6c} \\\times \sum_{u \mid (v,c)} \pfrac{12}{v/u} \pfrac{m}{u}  \,\sqrt{\frac uc} K\left(\left(\tfrac{v^2}{u^2}m\right)',u';c/u\right).
\end{multline*}
We replace $c$ by $cu$ and switch the order of summation to obtain
\begin{equation*}
	L_v^\pm(m,n) = 2|mn|^{-1/4} \sum_{u\mid v} \pfrac{12}{v/u} \pfrac mu \sqrt{\frac vu} \, \sum_{c>0} \frac 1c K\left(\left(\tfrac{v^2}{u^2}m\right)',u';c\right) \varphi^\pm \pfrac{\pi v/u \sqrt{|mn|}}{6c}.
\end{equation*}
Finally, letting $d=v/u$ we conclude that
\begin{equation*}
	L_v^\pm(m,n) = 
	2|mn|^{-1/4} \sum_{d\mid v} \sqrt d \pfrac{12}{d} \pfrac{m}{v/d} \sum_{c>0} \frac{K((d^2m)',n';c)}c \varphi^\pm\pfrac{\pi \sqrt{|d^2mn|}}{6c},
\end{equation*}
from which Proposition \ref{prop:traces-no-deriv} follows.
\end{proof}

Theorem \ref{thm:main} now follows easily from Proposition \ref{prop:traces-no-deriv}.

\begin{proof}[Proof of Theorem \ref{thm:main}]
As above, we let $P_{v,Q}(\tau,s):=P_v(\tau,s)$ when $Q$ has positive non-square discriminant.
Then the definition of the traces in \eqref{eq:def-traces-1} becomes
\[
	\Tr_v(m,n)=
	\begin{dcases}
		|mn|^{-1/2} \sum_{Q\in \Gamma\backslash\sQ_{mn}^{(1)}} \chi_m(Q) P_v(\tau_Q) &\text{ if }mn<0,\\
		\frac1{2\pi}\sum_{Q\in \Gamma\backslash\sQ_{mn}^{(1)}} \chi_m(Q) \int_{C_Q} \left[\frac{\partial}{\partial s}P_{v,Q}(\tau,s) \bigg|_{s=2}\right] \frac{d\tau}{Q(\tau,1)} &\text{ if }mn>0.
	\end{dcases}
\]
When $mn<0$, \eqref{eq:main-thm} follows immediately from \eqref{eq:p-m-n-exact} and \eqref{eq:mn-neg}. When $m,n>0$, 
we take the derivative of each side with respect to $s$, then set $s=2$. Comparing the resulting equation with \eqref{eq:p-m-n-exact} gives \eqref{eq:main-thm}.
\end{proof}

\section*{Acknowledgments}

The author is grateful to Jan Bruinier for encouraging him to investigate the arithmetic nature of the coefficients $p(m,n)$.

\bibliographystyle{plain}
\bibliography{bibliography}

\begin{thebibliography}{10}

\bibitem{Ahlgren:2013aa}
Scott Ahlgren and Nickolas Andersen.
\newblock Weak harmonic maass forms of weight 5/2 and a mock modular form for
  the partition function.
\newblock {\em Preprint}, arXiv:1312.1943 [math.NT].

\bibitem{Andersen:Geodesics}
Nickolas Andersen.
\newblock Periods of the $j$-function along infinite geodesics and mock modular
  forms.
\newblock {\em Preprint}.

\bibitem{BEW:Gauss}
Bruce~C. Berndt, Ronald~J. Evans, and Kenneth~S. Williams.
\newblock {\em Gauss and {J}acobi sums}.
\newblock Canadian Mathematical Society Series of Monographs and Advanced
  Texts. John Wiley \& Sons, Inc., New York, 1998.
\newblock A Wiley-Interscience Publication.

\bibitem{Borcherds:infinite}
Richard~E. Borcherds.
\newblock Automorphic forms on {${\rm O}_{s+2,2}({\bf R})$} and infinite
  products.
\newblock {\em Invent. Math.}, 120(1):161--213, 1995.

\bibitem{Bringmann:2006}
Kathrin Bringmann and Ken Ono.
\newblock The {$f(q)$} mock theta function conjecture and partition ranks.
\newblock {\em Invent. Math.}, 165(2):243--266, 2006.

\bibitem{BO:partition}
Kathrin Bringmann and Ken Ono.
\newblock An arithmetic formula for the partition function.
\newblock {\em Proc. Amer. Math. Soc.}, 135(11):3507--3514 (electronic), 2007.

\bibitem{BO:Dyson}
Kathrin Bringmann and Ken Ono.
\newblock Dyson's ranks and {M}aass forms.
\newblock {\em Ann. of Math. (2)}, 171(1):419--449, 2010.

\bibitem{Bruinier:2010a}
Jan Bruinier and Ken Ono.
\newblock Heegner divisors, {$L$}-functions and harmonic weak {M}aass forms.
\newblock {\em Ann. of Math. (2)}, 172(3):2135--2181, 2010.

\bibitem{BFI:Regularized}
Jan~H. Bruinier, Jens Funke, and Ozlem Imamoglu.
\newblock Regularized theta liftings and periods of modular functions.
\newblock {\em J. Reine Angew. Math., to appear}.

\bibitem{Bruinier:Periods}
Jan~Hendrik Bruinier.
\newblock Harmonic {M}aass forms and periods.
\newblock {\em Math. Ann.}, 357(4):1363--1387, 2013.

\bibitem{Bruinier:2004}
Jan~Hendrik Bruinier and Jens Funke.
\newblock On two geometric theta lifts.
\newblock {\em Duke Math. J.}, 125(1):45--90, 2004.

\bibitem{BO:AlgebraicFormulas}
Jan~Hendrik Bruinier and Ken Ono.
\newblock Algebraic formulas for the coefficients of half-integral weight
  harmonic weak {M}aass forms.
\newblock {\em Adv. Math.}, 246:198--219, 2013.

\bibitem{Duke:Uniform}
W.~Duke.
\newblock Modular functions and the uniform distribution of {CM} points.
\newblock {\em Math. Ann.}, 334(2):241--252, 2006.

\bibitem{DIT:CycleIntegrals}
W.~Duke, {{\"O}}. Imamo{\=g}lu, and {{\'A}}. T{{\'o}}th.
\newblock Cycle integrals of the {$j$}-function and mock modular forms.
\newblock {\em Ann. of Math. (2)}, 173(2):947--981, 2011.

\bibitem{Fischer:Dedekind}
Wilhelm Fischer.
\newblock On {D}edekind's function {$\eta(\tau)$}.
\newblock {\em Pacific J. Math.}, 1:83--95, 1951.

\bibitem{GKZ:Heegner}
B.~Gross, W.~Kohnen, and D.~Zagier.
\newblock Heegner points and derivatives of {$L$}-series. {II}.
\newblock {\em Math. Ann.}, 278(1-4):497--562, 1987.

\bibitem{HardyRamanujan}
Godfrey~H Hardy and Srinivasa Ramanujan.
\newblock Asymptotic formulae in combinatory analysis.
\newblock {\em Proceedings of the London Mathematical Society}, 2(1):75--115,
  1918.

\bibitem{Hooley:Asymptotic}
C.~Hooley.
\newblock An asymptotic formula in the theory of numbers.
\newblock {\em Proc. London Math. Soc. (3)}, 7:396--413, 1957.

\bibitem{Iwaniec:Topics}
Henryk Iwaniec.
\newblock {\em Topics in classical automorphic forms}, volume~17 of {\em
  Graduate Studies in Mathematics}.
\newblock American Mathematical Society, Providence, RI, 1997.

\bibitem{Jenkins:Kloosterman}
Paul Jenkins.
\newblock Kloosterman sums and traces of singular moduli.
\newblock {\em J. Number Theory}, 117(2):301--314, 2006.

\bibitem{Knapp:elliptic}
Anthony~W. Knapp.
\newblock {\em Elliptic curves}, volume~40 of {\em Mathematical Notes}.
\newblock Princeton University Press, Princeton, NJ, 1992.

\bibitem{Kohnen:Fourier}
Winfried Kohnen.
\newblock Fourier coefficients of modular forms of half-integral weight.
\newblock {\em Math. Ann.}, 271(2):237--268, 1985.

\bibitem{Ono:Web}
Ken Ono.
\newblock {\em The web of modularity: arithmetic of the coefficients of modular
  forms and {$q$}-series}, volume 102 of {\em CBMS Regional Conference Series
  in Mathematics}.
\newblock Published for the Conference Board of the Mathematical Sciences,
  Washington, DC; by the American Mathematical Society, Providence, RI, 2004.

\bibitem{Ono:2009}
Ken Ono.
\newblock Unearthing the visions of a master: harmonic {M}aass forms and number
  theory.
\newblock In {\em Current developments in mathematics, 2008}, pages 347--454.
  Int. Press, Somerville, MA, 2009.

\bibitem{Rademacher1}
Hans Rademacher.
\newblock On the {P}artition {F}unction $p(n)$.
\newblock {\em Proc. London Math. Soc.}, S2-43(4):241.

\bibitem{Rademacher2}
Hans Rademacher.
\newblock On the expansion of the partition function in a series.
\newblock {\em Ann. of Math. (2)}, 44:416--422, 1943.

\bibitem{AS:Pocketbook}
Irene~A. Stegun, editor.
\newblock {\em Pocketbook of mathematical functions}.
\newblock Verlag Harri Deutsch, Thun, 1984.
\newblock Abridged edition of {\it Handbook of mathematical functions} edited
  by Milton Abramowitz and Irene A. Stegun.

\bibitem{Toth:Salie}
{{\'A}}rp{{\'a}}d T{{\'o}}th.
\newblock On the evaluation of {S}ali{\'e} sums.
\newblock {\em Proc. Amer. Math. Soc.}, 133(3):643--645 (electronic), 2005.

\bibitem{Weil:Exponential}
Andr{{\'e}} Weil.
\newblock On some exponential sums.
\newblock {\em Proc. Nat. Acad. Sci. USA}, 34:204--207, 1948.

\bibitem{Whiteman}
Albert~Leon Whiteman.
\newblock A sum connected with the series for the partition function.
\newblock {\em Pacific J. Math.}, 6:159--176, 1956.

\bibitem{Zagier:2002}
Don Zagier.
\newblock Traces of singular moduli.
\newblock In {\em Motives, polylogarithms and {H}odge theory, {P}art {I}
  ({I}rvine, {CA}, 1998)}, volume~3 of {\em Int. Press Lect. Ser.}, pages
  211--244. Int. Press, Somerville, MA, 2002.

\bibitem{Zagier:2009}
Don Zagier.
\newblock Ramanujan's mock theta functions and their applications (after
  {Z}wegers and {O}no-{B}ringmann).
\newblock {\em Ast{\'e}risque}, (326):Exp. No. 986, vii--viii, 143--164 (2010),
  2009.
\newblock S{{\'e}}minaire Bourbaki. Vol. 2007/2008.

\bibitem{Zwegers:RealAnalytic}
S.~P. Zwegers.
\newblock Mock {$\theta$}-functions and real analytic modular forms.
\newblock In {\em {$q$}-series with applications to combinatorics, number
  theory, and physics ({U}rbana, {IL}, 2000)}, volume 291 of {\em Contemp.
  Math.}, pages 269--277. Amer. Math. Soc., Providence, RI, 2001.

\bibitem{Zwegers:thesis}
Sander~P. Zwegers.
\newblock Mock theta functions.
\newblock {\em Ph.D. Thesis, Universiteit Utrecht}, 2002.

\end{thebibliography}

\end{document}